\documentclass[preprint,12pt]{elsarticle}



\usepackage{graphics}
\graphicspath{{.},{pics/}}


\usepackage{amssymb}
\usepackage{amsfonts}
\usepackage{mathptmx}
\usepackage{amsmath}
\usepackage{amsthm}
\usepackage{url}
\usepackage{color}

\newcommand{\defeq}{\mathrel{\mathop:}=}

\newcommand{\ud}{\mathrm{d}}
\newcommand{\R}{\mathbb{R}}

\newtheorem{theorem}{Theorem}

\newtheorem{lemma}[theorem]{Lemma}
\newtheorem{definition}[theorem]{Definition}

\newtheorem{remark}[theorem]{Remark}
\theoremstyle{definition}

\usepackage{algorithm}


 \biboptions{sort&compress}

\journal{Computational Statistics and Data Analysis}

\begin{document}

\begin{frontmatter}



\title{Adaptive Metropolis Algorithm Using Variational Bayesian Adaptive Kalman Filter}


\author[lut]{Isambi S. Mbalawata\corref{cor1}}%
\ead{Isambi.Mbalawata@lut.fi}
\author[aalto]{Simo S\"arkk\"a}
\author[jyv]{Matti Vihola}
\author[lut]{Heikki Haario}
\cortext[cor1]{Corresponding author} \address[lut]{Department of
  Mathematics and Physics, Lappeenranta University of Technology,
  P.O.Box 20, FI-53851 Lappeenranta, Finland}
\address[aalto]{Department of Biomedical Engineering and Computational
  Science, Aalto University, P.O.Box 12200, FI-00076 Aalto, Finland}
\address[jyv]{Department of Statistics, University of Oxford, 1 South Parks Road, Oxford, OX1 3TG, United Kingdom}

\begin{abstract}
  Markov chain Monte Carlo (MCMC) methods are powerful computational
  tools for analysis of complex statistical problems.  However, their
  computational efficiency is highly dependent on the chosen proposal
  distribution, which is generally difficult to find. One way to solve this problem is to use adaptive MCMC
  algorithms which automatically tune the statistics of a proposal
  distribution during the MCMC run.  
	A new adaptive MCMC algorithm, called the variational Bayesian adaptive Metropolis (VBAM) algorithm,
	is developed. The VBAM algorithm updates the proposal covariance matrix using the variational Bayesian adaptive Kalman filter (VB-AKF). A strong law of large numbers for the VBAM algorithm is proven. The empirical convergence results for three simulated examples and for two real data examples are also provided.
\end{abstract}
\begin{keyword}
Markov chain Monte Carlo\sep
Adaptive Metropolis algorithm\sep
Adaptive Kalman filter\sep
Variational Bayes.
\end{keyword}

\end{frontmatter}


\section{Introduction}
Markov chain Monte Carlo (MCMC) methods \citep{Brooks+Gelman+Jones+Men:2011} are an important class of numerical
tools for approximating multidimensional integrals over complicated
probability distributions in Bayesian computations and in various
other fields. The computational efficiency of MCMC sampling depends on
the choice of the proposal distribution. A challenge of MCMC methods
is that in complicated high-dimensional models it is very hard to find
a good proposal distribution.

The Gaussian distribution is often used as a proposal distribution due
to its theoretical and computational properties. However, the Gaussian
proposal distribution needs a well tuned covariance matrix for optimal
acceptance rate and good mixing of the Markov chain. If the covariance matrix is too small, too large or has improper correlation structure, the Markov chains will be highly positively
correlated and hence the estimators will have a large variance. Because
manual tuning is laborious, several adaptive MCMC algorithms
have been suggested
\citep{Haario+Saksman+Tamminen:1999,Haario+Saksman+Tamminen:2001,Haario+Laine+Mira+Saksman:2006,Vihola:2012,
Andrieu+Thoms:2008,Roberts+Rosenthal:2007b,Roberts+Rosenthal:2009,Atchade+Rosenthal:2005,Gelman+Roberts+Gilks:1996}
to update the covariance matrix during the MCMC run.

In this article, we propose a new adaptive Metropolis algorithm, where
we update the covariance matrix of the Gaussian proposal distribution
of the Metropolis algorithm using the variational Bayesian adaptive Kalman
filter (VB-AKF) proposed by \citet{Sarkka+Nummenmaa:2009} and \citet{Sarkka+Hartikainen:2013}.
The idea of the classical Metropolis algorithm \citep{Haario+Saksman+Tamminen:1999} is essentially to empirically estimate the covariance of the samples and use this estimate to construct the proposal distribution. However, as we point out here, such a covariance estimation problem can also be formulated as an instance of recursive Bayesian estimation, where the term used for this kind of recursive estimation is \textit{Bayesian filtering} \citep{Sarkka:2013}. This reinterpretation allows one to construct alternative and potentially more effective adaptation mechanisms by utilizing the various Bayesian filtering algorithms developed over the years for doing the covariance estimation. The aim of this article is to propose a practical algorithm which is constructed from this underlying idea, prove its convergence, and test its performance empirically.

The structure of this article is the following: in Section
\ref{sec:mcmc} we review the existing adaptive MCMC methods. Section
\ref{sec:vbak} is dedicated to our new adaptive Metropolis algorithm.
Theoretical validity of the proposed algorithm is shown in Section
\ref{sec:lln} by proving a strong law of large numbers. In Section
\ref{sec:nume}, we study the empirical convergence of the method in three simulated examples and then apply the method
to two real data examples.

\section{Adaptive Markov Chain Monte Carlo Methods}\label{sec:mcmc}
Markov chain Monte Carlo (MCMC) methods are widely used algorithms for
drawing samples from complicated multidimensional probability
distributions. For example, in Bayesian analysis
\citep{Gelman+etal:2013}, we are often interested in
computing the posterior expectation of a function
$\mathbf{g}(\boldsymbol{\theta})$ given the measurements
$\mathbf{z}_1,\ldots,\mathbf{z}_M$:
\begin{equation}
  \mathrm{E}[\mathbf{g}(\theta) \mid \mathbf{z}_1,\ldots,\mathbf{z}_M]
  = \int_{\mathbb{R}^{d}} \mathbf{g}(\theta) \,
     p(\theta  \mid  \mathbf{z}_1,\ldots,\mathbf{z}_M) \, \mathrm{d}\theta.
\label{eq:moment}
\end{equation}
We can use MCMC methods to approximate the expectation by
drawing samples from the posterior distribution
\begin{equation}
  \boldsymbol{\theta}_1,\boldsymbol{\theta}_2,
    \ldots,\boldsymbol{\theta}_n
  \sim p(\boldsymbol{\theta}  \mid \mathbf{z}_1,\ldots,\mathbf{z}_M),
\end{equation}
and then by employing the approximation
\begin{equation}
  \mathrm{E}[\mathbf{g}(\boldsymbol{\theta}) \mid \mathbf{z}_1,\ldots,\mathbf{z}_M] \approx
  \frac{1}{n} \sum_{i=1}^n \mathbf{g}(\boldsymbol{\theta}_i).\label{eq:mcmc1}
\end{equation}
A common construction for MCMC uses a random walk that explores the state space
through local moves. The most well-known traditional MCMC method is
the \emph{Metropolis algorithm}. In the Metropolis algorithm we draw a candidate point
$\boldsymbol{\theta}_*$ from a symmetric proposal distribution
$q(\boldsymbol{\theta}_* \mid \boldsymbol{\theta})$ and use an
accept/reject rule to accept or reject the sampled point
\citep{Gilks+Richardson+Spiegelhalter:1996,Gelman+etal:2013,Brooks+Gelman+Jones+Men:2011}.

The efficiency of an MCMC algorithm can be improved by carefully tuning
the proposal distribution. Adaptive MCMC methods are a family of
algorithms, which take care of the tuning automatically. This proposal
is often chosen to be a Gaussian distribution, in which case it is the
covariance matrix that needs to be tuned. Under certain settings
\citet{Gelman+Roberts+Gilks:1996} show that the optimal covariance
matrix for an MCMC algorithm with  Gaussian proposal is $\lambda \,
\boldsymbol{\Sigma}$, with $\lambda=2.38^2/d$, where $d$ is the
dimension and $\boldsymbol{\Sigma}$ is the $d\times d$ covariance
matrix of the target distribution.

In the \emph{adaptive Metropolis (AM) algorithm} by
\citet{Haario+Saksman+Tamminen:2001}, the covariance matrix
$\boldsymbol{\Sigma}_{k-1}$ for the step $k$ is estimated as follows:
\begin{equation}\label{eq:covc}
  \boldsymbol{\Sigma}_{k-1} =
  \text{cov}(\boldsymbol{\theta}_0,
   \boldsymbol{\theta}_1,\ldots,\boldsymbol{\theta}_{k-1})
  +\epsilon \mathbf{I},
\end{equation}
where $\mathbf{I}$ is the $d\times d$ identity matrix and $\epsilon$ is
a small positive value whose role is to make sure that
$\boldsymbol{\Sigma}_{k-1}$ is not singular
\citep{Haario+Saksman+Tamminen:1999,Haario+Saksman+Tamminen:2001}.
The AM algorithm of \citet{Haario+Saksman+Tamminen:2001} can be
summarized as follows:
\begin{itemize}
\item Initialize $\boldsymbol{\theta}_0$, $\boldsymbol{\Sigma}_0$.
\item   For $k = 1,2,3,\ldots$
\begin{itemize}
\item Sample a candidate point $\boldsymbol{\theta}_*$ from a Gaussian distribution
\begin{equation}
  \boldsymbol{\theta}_* \sim \mathbb{N}(\boldsymbol{\theta}_{k-1}, \lambda \, \boldsymbol{\Sigma}_{k-1}).
\end{equation}

\item Compute the acceptance probability
\begin{equation}
 \alpha_k
  = \text{min}\left\{1,\, \frac{p(\boldsymbol{\theta}_* \mid \mathbf{z}_1,\ldots,\mathbf{z}_M)}{p(\boldsymbol{\theta}_{k-1} \mid \mathbf{z}_1,\ldots,\mathbf{z}_M)}\right\}.
\end{equation}

\item Sample a random variable $u$ from the uniform distribution $\mathbb{U}(0,1)$.

\item If $u<\alpha_k$, set $\boldsymbol{\theta}_{k} =  \boldsymbol{\theta}_*$.
Otherwise set $\boldsymbol{\theta}_{k} =  \boldsymbol{\theta}_{k-1}$.
\item Compute the covariance matrix $\boldsymbol{\Sigma}_k$ using Equation
  \eqref{eq:covc}.
\end{itemize}
\end{itemize}
Different adaptive algorithms have been proposed as improved versions
of the AM algorithm above. Good surveys of such
algorithms are found in \citet{Andrieu+Thoms:2008,Liang+Liu+Carroll:2010}, where the
authors present ways to implement the algorithms and then show why the
algorithms preserve the correct stationary distributions.
For instance, apart from updating the covariance alone, one can adapt $\lambda$ using the following
Robbins--Monro algorithm, which alleviates the problem of
$\boldsymbol{\Sigma}_k$ being systematically too large or too small
\citep{Andrieu+Thoms:2008,Atchade+Fort:2010,vihola:2011}:
\begin{equation}\label{eq:rma}
\log(\lambda_{k}) = \log(\lambda_{k-1}) + \gamma_{k} \, (\alpha_k-\overline{\alpha}).
\end{equation}
In Equation~\eqref{eq:rma}, $\overline{\alpha}$ is the target
acceptance rate which is commonly set to 0.234 and $\gamma_{k}$ is
a gain factor sequence satisfying the following conditions:
$$ \sum_{k=1}^\infty \gamma_k = \infty \quad \text{and} \quad
\sum_{k=1}^\infty \gamma_k^{1+\delta} < \infty \quad \text{ for some }
\delta\in(0,1].
$$
%
%
In a recent paper, \citet{Vihola:2012} introduced the robust adaptive
Metropolis (RAM) algorithm with an online rule for adapting the covariance
matrix and a mechanism for maintaining the mean acceptance rate at a
pre-determined level.  \citet{Holden+Hauge+Holden:2009} describe an
adaptive independent Metropolis--Hastings algorithm, where the
proposal is adapted with past samples.  The limitation of this
adaptation is that the information gained from doing the local steps
cannot be used, so the algorithm iterations do not improve the
proposal \citep{Holden+Hauge+Holden:2009,Liang+Liu+Carroll:2010}.
\citet{Gilks+Roberts+Sahu:1998} proposed regeneration-based adaptive
algorithms, where after each regeneration point the proposal
distribution is modified based on all the past samples and the future
outputs become independent of the past. In the population-based
adaptive algorithms the proposal distributions are designed such that
computational techniques are incorporated into simulations and the
covariance matrix is adapted using a population of independent and
identically distributed samples from the adaptive direction sampler
\citep{Gilks+Roberts+George:1994} or the evolutionary Monte Carlo
\citep{Ren+Ding+Liang:2008}.

Another type of adaptive MCMC is proposed
by \citet{Vrugt+Braak+Diks+Robinson+Hyman+Higdon:2009} and
\citet{Vrugt+Braak:2011}, where they integrate the MCMC algorithm and
differential evolution. This type of algorithm generates multiple
different chains simultaneously for global exploration, and
automatically tunes the scale and orientation of the proposal
distribution in randomized subspaces during the search.
One way of adapting the MCMC proposal distribution is by using multiple
copies of the target density and Gibbs sampling
\cite{Cai+Meyer+Perron:2006,Griffin+Walker:2013}. The idea is
that a product of the proposal density and copies of the target density
is used to define a joint density which is sampled by Metropolis--Hastings-within-Gibbs algorithm.

When constructing an adaptive algorithm one should be sure that
the conditions for ergodicity are satisfied to ensure that the
algorithm converges to the target distribution
\citep{Andrieu+Moulines:2006,Roberts+Rosenthal:2007b,Saksman+Vihola:2010,Atchade+Fort:2010,Bai+Roberts+Rosenthal:2011}.
One way to ensure the ergodicity property of adaptive MCMC is in terms
of the two general conditions \citep{Bai:2009,Bai+Roberts+Rosenthal:2011}: diminishing adaptation
and containment condition. However,
these conditions are not necessary and there exist valid adaptation
mechanisms that do not have these properties.
In our proposed algorithm, to ensure ergodicity, we impose slightly stronger conditions that imply the diminishing 
and containment conditions. The imposed conditions also imply a strong law of large numbers rather than just a weak law of large numbers.

\section{Adaptive Metropolis Algorithm with Variational Bayesian Adaptive Kalman Filter Based Covariance Update}\label{sec:vbak}
In this section, we first briefly review the noise adaptive Kalman
filter \citep{Sarkka+Hartikainen:2013} which is used to adapt the
covariance matrix of the proposal distribution, and then present the
proposed variational Bayesian adaptive Metropolis (VBAM) algorithm.
\subsection{Kalman Filter}\label{sec:kf}
Kalman filter \citep{Kalman:1960} is the classical algorithm for
estimation of the dynamic state from noisy measurements in linear
Gaussian state space models. In probabilistic terms
the model can be expressed as \citep{Jazwinski:1970,Sarkka:2013}
\begin{align*}
\mathbf{x}_k& \sim
  \mathbb{N}(\mathbf{A}_{k-1} \, \mathbf{x}_{k-1}, \mathbf{Q}_{k-1}), \\
  \mathbf{y}_k&
  \sim\mathbb{N}(\mathbf{H}_k \, \mathbf{x}_k,\boldsymbol{\Sigma}_k),
\end{align*}
where $\mathbb{N}(\cdot)$ denotes the multivariate Gaussian
distribution, $\mathbf{x}_k \in \mathbb{R}^n$ is the dynamic state,
$\mathbf{A}_{k-1}$ is the dynamic model matrix, $\mathbf{Q}_{k-1}$ is
the process noise covariance, $\mathbf{y}_k \in \mathbb{R}^d$ is the
measurement, $\mathbf{H}_k$ is the measurement matrix, and
$\boldsymbol{\Sigma}_k$ is the measurement noise covariance matrix.
Here, $\mathbf{x}_k$ is an unknown variable and
$\mathbf{y}_k$ is an observed variable, whereas the matrices $\mathbf{A}_{k-1}$, $\mathbf{Q}_{k-1}$, $\mathbf{H}_{k}$, and $\boldsymbol{\Sigma}_k$ are assumed known.
We further assume that
$\mathbf{x}_0\sim\mathbb{N}(\mathbf{m}_0,\mathbf{P}_0)$, where
$\mathbf{m}_0$ and $\mathbf{P}_0$ are the known prior mean and
covariance. The estimation of states is recursively performed using
two Kalman filter steps:
\begin{enumerate}
\item Prediction step:
\begin{equation}
\begin{split}
\mathbf{m}_k^- &= \mathbf{A}_{k-1} \, \mathbf{m}_{k-1}, \\
\mathbf{P}_k^- &= \mathbf{A}_{k-1} \,  \mathbf{P}_{k-1} \, \mathbf{A}^T_{k-1}
  + \mathbf{Q}_{k-1}.
\end{split}
\end{equation}

\item Update step:
\begin{equation}
\begin{split}
  \mathbf{S}_k &=
  \mathbf{H}_k \, \mathbf{P}_k^- \, \mathbf{H}_k^T + \boldsymbol{\Sigma}_k, \\
\mathbf{K}_k &= \mathbf{P}_k^- \, \mathbf{H}_k^T \, \mathbf{S}_k^{-1}, \\
\mathbf{m}_k &= \mathbf{m}_k^- + \mathbf{K}_k \,
  \left(\mathbf{y}_k - \, \mathbf{H}_k \, \mathbf{m}_k^-\right), \\
\mathbf{P}_k &= \mathbf{P}_k^- - \mathbf{K}_k \, \mathbf{S}_k \, \mathbf{K}_k^T,
\end{split}
\end{equation}
\end{enumerate}
where $\mathbf{m}_k^-$ is the a priori state mean, $\mathbf{m}_k$ is
the a posteriori state mean, $\mathbf{P}_k^-$ is the a priori state
covariance, and $\mathbf{P}_k$ is the a posteriori state
covariance. In Bayesian sense the Kalman filter computes the
statistics for the following conditional distribution of the state
given the measurements:
\begin{equation}
  p(\mathbf{x}_k \mid \mathbf{y}_{1:k})
  = \mathbb{N}(\mathbf{x}_k \mid \mathbf{m}_k,\mathbf{P}_k).
\end{equation}
If $\mathbf{A}_{k-1} = \mathbf{I}$ and the process noise is zero
$\mathbf{Q}_{k-1} = \mathbf{0}$, the Kalman filter reduces to the so called
recursive least squares (RLS) algorithm which solves a general
multivariate linear estimation (regression) problem recursively.  The
matrices $\mathbf{A}_{k-1}$ and $\mathbf{Q}_{k-1}$ can be used for
modeling the dynamics of the state when it is not assumed to be static
(as in RLS).
\subsection{Variational Noise Adaptation}
In the Kalman filter discussed above, the model matrices in the state
space model are assumed to be known. The variational Bayesian adaptive
Kalman filter (VB-AKF, \citep{Sarkka+Nummenmaa:2009,Sarkka+Hartikainen:2013}) is considered with
the case where the noise covariance $\boldsymbol{\Sigma}_k$ is unknown.
The model is assumed to be of the form:
\begin{align}
  \mathbf{x}_k &\sim
    \mathbb{N}(\mathbf{A}_{k-1} \, \mathbf{x}_{k-1}, \mathbf{Q}_{k-1}),\label{eq:001}\\
  \mathbf{y}_k &\sim
    \mathbb{N}(\mathbf{H}_k\mathbf{x}_k,\boldsymbol{\Sigma_k}),\label{eq:002}\\
  \boldsymbol{\Sigma}_k &\sim
  p(\boldsymbol{\Sigma}_k  \mid  \boldsymbol{\Sigma}_{k-1})\label{eq:003},
\end{align}
where Equation \eqref{eq:003} defines the Markovian dynamic model
prior for the unknown measurement noise covariances. If we were
able to implement the optimal (non-Gaussian) Bayesian filter for this model,
it would compute the distribution
\begin{align}
  p(\mathbf{x}_k,\boldsymbol{\Sigma}_k \mid \mathbf{y}_{1:k}).
\end{align}
Recall that the Kalman filter can be considered as a generalization of
the RLS algorithm. In the same way we can
consider the Bayesian filter for the above model as a generalization
of the RLS algorithm where the noise variance is estimated together
with the linear regression solution. We can also interpret the AM
covariance adaption rule in Equation \eqref{eq:covc} as a simple
linear regression problem where we estimate the noise covariance
together with the linear regression solution. However, in AM we throw
out the linear regression solution and only retain the covariance.

Because the above model is a generalization of RLS with covariance
estimation, it can be seen to provide a recursive solution for estimation
of the covariance in Equation \eqref{eq:covc} as a special case. This
is the idea of our method. However, there is no reason to only
use the parameters which reduce the Bayesian filter to the RLS, but
one can use the full state space model and the corresponding filter
to construct an adaptive Metropolis algorithm.

Unfortunately, the exact Bayesian filter for the above model is
computationally intractable. However, the joint filtering distribution of
the state and covariance matrix can be approximated with the
free-form variational Bayesian approximation as follows
\citep{Sarkka+Nummenmaa:2009,Sarkka+Hartikainen:2013}:
\begin{equation}
p(\mathbf{x}_k,\boldsymbol{\Sigma}_k \mid \mathbf{y}_{1:k})
\approx Q_\mathbf{x}(\mathbf{x}_k) \, Q_{\boldsymbol{\Sigma}}(\boldsymbol{\Sigma}_k),
\end{equation}
where $Q_\mathbf{x}(\mathbf{x}_k)$ and
$Q_{\boldsymbol{\Sigma}}(\boldsymbol{\Sigma}_k)$ are unknown
approximating densities formed by minimizing the Kullback--Leibler
(KL) divergence between the true distribution and the approximation:
\begin{equation}
\begin{split}
&\textrm{KL}\left[Q_\mathbf{x}(\mathbf{x}_k)Q_{\boldsymbol{\Sigma}}(\boldsymbol{\Sigma}_k)
\,||\,p(\mathbf{x}_k,\boldsymbol{\Sigma}_k \mid \mathbf{y}_{1:k})\right] \\
&\qquad = \int Q_\mathbf{x}(\mathbf{x}_k) \, Q_{\boldsymbol{\Sigma}}(\boldsymbol{\Sigma}_k)
  \, \log \left(
  \frac{Q_\mathbf{x}(\mathbf{x}_k) \, Q_{\boldsymbol{\Sigma}}(\boldsymbol{\Sigma}_k)}
       {p(\mathbf{x}_k,\boldsymbol{\Sigma}_k \mid \mathbf{y}_{1:k})}
   \right) \, \mathrm{d}\mathbf{x} \, \mathrm{d}\boldsymbol{\Sigma}_k.
\end{split}
\label{eq:kldiv}
\end{equation}
The KL divergence above can be minimized with respect
to $Q_\mathbf{x}(\mathbf{x}_k)$ and
$Q_{\boldsymbol{\Sigma}}(\boldsymbol{\Sigma}_k)$ using the methods
from calculus of variations, which results in the following
approximations \citep{Sarkka+Nummenmaa:2009,Sarkka+Hartikainen:2013}:
\begin{align}
Q_\mathbf{x}(\mathbf{x}_k)
  &\propto
  \exp\left(\int\log p(\mathbf{x}_k,\boldsymbol{\Sigma}_k \mid \mathbf{y}_{1:k}) \,
     Q_{\boldsymbol{\Sigma}}(\boldsymbol{\Sigma}_k) \,
     \mathrm{d}\boldsymbol{\Sigma}_k\right), \label{eq:qx_eq} \\
  Q_{\boldsymbol{\Sigma}}(\boldsymbol{\Sigma}_k)
  &\propto
  \exp\left(\int\log p(\mathbf{x}_k,\boldsymbol{\Sigma}_k \mid \mathbf{y}_{1:k}) \,
  Q_{\mathbf{x}}(\mathbf{x}_k) \,
  \mathrm{d}\mathbf{x}_k\right). \label{eq:qs_eq}
\end{align}
Solving the above equations leads to the following
approximation \citep{Sarkka+Nummenmaa:2009,Sarkka+Hartikainen:2013}:
\begin{align}
  p(\mathbf{x}_k,\boldsymbol{\Sigma}_k \mid \mathbf{y}_{1:k-1})
  &\approx \mathbb{N}(\mathbf{x}_k \mid,\mathbf{m}_k,\mathbf{P}_k) \,
           \mathbb{IW}(\boldsymbol{\Sigma}_k \mid \nu_k,\mathbf{V}_k),
\end{align}
where $\mathbf{m}_k$ and $\mathbf{P}_k$ are given by the standard Kalman filter, and $\nu_k$ and
$\mathbf{V}_k$ are the parameters of the inverse
Wishart ($\mathbb{IW}$) distribution. The proposal covariance can, for
example, be computed as the mean of the inverse Wishart distribution:
  \begin{equation}\label{eq:sigma}
    \boldsymbol{\Sigma}_k=\frac{1}{\nu_k-d-1}\mathbf{V}_k.
  \end{equation}
The dynamic model $p(\boldsymbol{\Sigma}_k\mid\boldsymbol{\Sigma}_{k-1})$ needs to be chosen such that it produces inverse Wishart distribution on the Bayesian filter prediction step. As pointed out by
\citet{Sarkka+Nummenmaa:2009}, this kind of dynamical model is hard to construct explicitly, and hence they proposed heuristic dynamics for the covariances, which was then extended by \citet{Sarkka+Hartikainen:2013}. The following dynamic model is obtained:
\begin{equation}
\begin{split}
\nu_k^-& =\rho(\nu_{k-1}-d-1)+d+1, \\
\boldsymbol{\Sigma}_k^-&=
  \mathbf{B} \, \boldsymbol{\Sigma}_{k-1} \, \mathbf{B}^T,
\end{split}
\end{equation}
where $\nu_k^-$ and $\boldsymbol{\Sigma}_k^-$ are prior parameters, $\rho$ is a real number $0<\rho\leq 1$ and $\mathbf{B}$ is a matrix $0<|\mathbf{B}|\leq 1$.
Here the parameter $\rho$ controls the forgetting of the previous estimates of the covariance matrix by decreasing the degrees of freedom exponentially. The matrix $\mathbf{B}$ can be used to model the deterministic dynamics of the covariance matrix.

For our purposes, it is useful to write the VB-AKF algorithm in
a slightly modified form from \citet{Sarkka+Hartikainen:2013}, such that it involves the
covariance matrix in \eqref{eq:sigma} explicitly. The results is Algorithm~\ref{alg:vbakf}.
Note that in the algorithm, the matrices $\mathbf{A}_k$, $\mathbf{Q}_k$ and $\mathbf{H}_k$ are defined
by the selected state space model, and their use in VBAM will be shown in the numerical examples section.
%
\begin{algorithm}[H]
\caption{VB-AKF algorithm}
\label{alg:vbakf}
\begin{itemize}
\item Initialize $\nu_0$, $\mathbf{m}_0$, $\mathbf{P}_0$ and $\boldsymbol{\Sigma}_0$.
\item For $k=1,2,\ldots$
\begin{itemize}
\item \textbf{Prediction}: compute the parameters of the predicted distribution:
    \begin{align*}
\mathbf{m}_k^- &= \mathbf{A}_{k-1} \, \mathbf{m}_{k-1}, \\
\mathbf{P}_k^- &= \mathbf{A}_{k-1} \,  \mathbf{P}_{k-1} \, \mathbf{A}^T_{k-1}
  + \mathbf{Q}_{k-1}, \\
\nu_k^-&=\rho(\nu_{k-1}-d-1)+d+1, \\
\boldsymbol{\Sigma}_k^-&=
  \mathbf{B} \, \boldsymbol{\Sigma}_{k-1} \, \mathbf{B}^T,
    \end{align*}

  \item \textbf{Update}: set $\nu_k=\nu_k^-+1$ and
    $\boldsymbol{\Sigma}_k^{(1)} = \boldsymbol{\Sigma}_k^-$. Iterate
    the following until the convergence (say, $N$ times for
    $j=1,\ldots,N$):
    \begin{align}
\mathbf{S}_k^{(j+1)}&=\mathbf{H}_k \, \mathbf{P}_k^- \, \mathbf{H}_k^T
  + \boldsymbol{\Sigma}_k^{(j)}, \nonumber\\
\mathbf{K}_k^{(j+1)}&=\mathbf{P}_k^- \, \mathbf{H}_k^T \,
  \left(\mathbf{S}_k^{(j+1)}\right)^{-1}, \nonumber\\
\mathbf{m}_k^{(j+1)}&=\mathbf{m}_k^- + \mathbf{K}_k^{(j+1)} \,
  \left(\mathbf{y}_k - \mathbf{H}_k \, \mathbf{m}_k^-\right), \nonumber\\
\mathbf{P}_k^{(j+1)}&=\mathbf{P}_k^- - \mathbf{K}_k^{(j+1)} \,
  \mathbf{S}_k^{(j+1)} \, \left(\mathbf{K}_k^{(j+1)}\right)^T, \nonumber\\
\boldsymbol{\Sigma}_k^{(j+1)} &=
  \left( \frac{\nu_{k-1}-d-1}{\nu_{k}-d-1} \right) \boldsymbol{\Sigma}_k^-
    + \left( \frac{1}{\nu_{k}-d-1} \right)
  \mathbf{H}_k \, \mathbf{P}_k^{(j+1)} \, \mathbf{H}^T_k
       \nonumber\\
   &+ \left( \frac{1}{\nu_{k}-d-1} \right)
       \left( \mathbf{y}_k - \mathbf{H}_k \, \mathbf{m}_k^{(j+1)} \right)
      \left( \mathbf{y}_k - \mathbf{H}_k \, \mathbf{m}_k^{(j+1)} \right)^T.
       \label{eq:vk}
    \end{align}

\item Set $\boldsymbol{\Sigma}_k=\boldsymbol{\Sigma}_k^{(N)}$, $\mathbf{m}_k=\mathbf{m}_k^{(N)}$ and $\mathbf{P}_k=\mathbf{P}_k^{(N)}$.

\end{itemize}
\end{itemize}
\end{algorithm}
In Algorithm~\ref{alg:vbakf}, the choice of the number of iterations $N$ depends on the problem at hand. However, in the numerical examples we tested, we found out that the algorithm requires only a few iterations to converge (we used $N=5$). However, it would also be possible to use a stopping criterion which determines a suitable time to stop by monitoring the changes in the estimates at each iteration.
In the next section we will use the above algorithm to compute the covariance matrix
$\boldsymbol{\Sigma}_k$ for a proposal distribution.
\subsection{Variational Bayesian Adaptive Metropolis Algorithm}\label{sec:amcmcvb}
We are now ready to describe the variational Bayesian adaptive
Metropolis (VBAM) algorithm where the covariance matrix is updated with
VB-AKF. The idea is simply to replace the covariance computation in
Equation \eqref{eq:covc} with the estimate of covariance computed by
VB-AKF Algorithm \ref{alg:vbakf}. However, to ensure the convergence
of the method, we need the following restrictions (see
Section~\ref{sec:lln}):
\begin{enumerate}
\item The state space model in Equations~\eqref{eq:001} and
  \eqref{eq:002} needs to be uniformly completely
  controllable and observable with any
  bounded sequence of $\boldsymbol{\Sigma}_k$.
  This is quite natural, because otherwise the state estimation
  problem would not make sense as such. For the
  definitions of uniformly completely controllable and observable models, see Definitions
  \ref{def:uco} and \ref{def:ucc}.

\item We need to have $\rho = 1$ and $\mathbf{B} = \mathbf{I}$ in our
  dynamic model for the covariance matrix. This is needed to enforce
  diminishing adaptation of the covariance matrix.

\item The target distribution needs to be compactly supported. In
  principle, this is a restriction on the application domain of the
  method, but in practice, we can imagine to truncate the distribution
  at some value which exceeds to maximum floating point number that
  can be represented in the computer system.

\item We need to force uniform lower and upper bounds for the
  covariances $\boldsymbol{\Sigma}_k$ and therefore we need to include
  an additional boundary check to the method. For that purpose, we
  fix some constants $0<\mu_{1}\le \mu_{2}<\infty$ determining the
  feasible values for $\boldsymbol{\Sigma}_k$, and enforce by
  projection whenever necessary that $\mu_1 \, \mathbf{I} \le
  \boldsymbol{\Sigma}_k \le \mu_{2} \, \mathbf{I}$.  In practice, we
  can set $\mu_{1}$ to be very small and $\mu_{2}$ very high so that
  we practically never hit the boundaries.
  Note that for two matrices $A$ and $B$, the ordering $A \leq B$
  here means that $B - A \geq 0$ is positive semidefinite.

\item We also ensure that the values $\lambda_k$ stay within
  $[\delta_\lambda,\delta_\lambda^{-1}]$ for some constant
  $\delta_\lambda\in(0,1]$ by using a truncation procedure.
\end{enumerate}
The following Algorithm~\ref{alg:ama} is the variational Bayesian
adaptive Metropolis (VBAM) algorithm.
\begin{algorithm}[H]
\caption{Adaptive Metropolis algorithm with VB-AKF algorithm}
\label{alg:ama}
\begin{itemize}
\item Initialize $\boldsymbol{\theta}_0$,
  $\boldsymbol{\Sigma}_0$, $\mathbf{m}_0$, $\mathbf{P}_0$, $\nu_0$,
  and $\lambda_0$.
  The choice of initial parameters depends on the problem at hand. However, note that
  $\mathbf{m}_0 = \mathbf{0}$, $\mathbf{P}_0 = \mathbf{I}$, $\lambda_0=2.38^2/d$ and $\nu_0 = d+2$ are often used.
\item For $k=1,2,\ldots$
\begin{itemize}
\item Sample the candidate from the Gaussian proposal distribution
  $\boldsymbol{\theta}_*\sim\mathbb{N}(\boldsymbol{\theta}_{k-1},\lambda_{k-1}
  \,\boldsymbol{\Sigma}_{k-1})$ or from a Student's
  $t$-distribution with location $\theta_{k-1}$ and scale $\lambda_{k-1}
  \,\boldsymbol{\Sigma}_{k-1}$.
\item Calculate the acceptance probability:
$$\alpha_k=\text{min}\left\{1,\, \frac{p(\boldsymbol{\theta}_* \mid \mathbf{z}_1,\ldots,
\mathbf{z}_M)}{p(\boldsymbol{\theta}_{k-1} \mid \mathbf{z}_1,\ldots,\mathbf{z}_M)}\right\}.
$$
\item Generate $u\sim\mathbb{U}(0,1)$
\item Set
\[ \boldsymbol{\theta}_k = \left\{ \begin{array}{ll}
\boldsymbol{\theta}_* & \text{if $u<\alpha_k$},\\
\boldsymbol{\theta}_{k-1}& \text{Otherwise}.\end{array} \right. \]

\item Update the proposal covariance matrix by computing it with the
  VB-AKF Algorithm \ref{alg:vbakf} where
  $\mathbf{y}_k=\boldsymbol{\theta}_k$. Check that $\boldsymbol{\Sigma}_k >
  \mu_{1} \mathbf{I}$ and $\boldsymbol{\Sigma}_k < \mu_{2}
  \mathbf{I}$. If this is not true, set $\boldsymbol{\Sigma}_k =
  \boldsymbol{\Sigma}_{k-1}$ and do a single iteration of the VB-AKF
  update step (ignoring the last equation) to compute the updated mean
  and covariance corresponding to this noise covariance.

\item Optionally, update $\lambda_k$ using Equation~\eqref{eq:rma}.
  If $\lambda_k \notin [\delta_\lambda,\delta_\lambda^{-1}]$, then
  force it to the interval $[\delta_\lambda,\delta_\lambda^{-1}]$.
\end{itemize}
\end{itemize}
\end{algorithm}
%
%
\section{Proof of Convergence of VBAM}\label{sec:lln}
In this section we show the convergence of our variational Bayesian
adaptive Metropolis algorithm (VBAM) using the law of large numbers.
We consider two variants of the algorithm, with and without
$\lambda_k$, simultaneously. To prove the convergence of our proposed
VBAM algorithm, it is enough to prove the following theorem.

\begin{theorem}
    \label{th:slln}
    Suppose that the target density
    $\pi(\boldsymbol{\theta})=p(\boldsymbol{\theta} \mid
    \mathbf{z}_1,\ldots,\mathbf{z}_M)$ is bounded and has a bounded
    support. Furthermore, suppose that $\sum_{k\ge 1} k^{-1} \gamma_k
    <\infty$.  Then, for bounded functions $f$, the strong law of
    large numbers holds,
\[
    \frac{1}{n} \sum_{k=1}^n f(\mathbf{\boldsymbol{\theta}}_k) \xrightarrow{\text{a.s.}}
    \int_{\R^d}
    f(\boldsymbol{\theta}) \pi(\boldsymbol{\theta}) \ud \boldsymbol{\theta},
\]
where $\boldsymbol{\theta}_1,\ldots,\boldsymbol{\theta}_n$ are VBAM
generated samples assuming the uniform complete controllability and
observability properties (see Definitions \ref{def:uco} and
\ref{def:ucc}) of the state space model are satisfied.
\end{theorem}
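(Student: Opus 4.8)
The plan is to recognise the VBAM iteration as an adaptive Markov chain on $K\times\mathcal E$, where $K=\supp\pi$ is compact by hypothesis, $\mathcal E$ is the set of admissible adaptation states $\Gamma=(\boldsymbol{\Sigma},\mathbf m,\mathbf P,\nu,\lambda)$ produced by Algorithm~\ref{alg:vbakf} inside Algorithm~\ref{alg:ama}, and, given $\Gamma_{k-1}$, the point $\boldsymbol{\theta}_k$ is drawn from the symmetric random-walk Metropolis kernel $P_{\Gamma_{k-1}}(\boldsymbol{\theta}_{k-1},\uarg)$ with target $\pi$ and Gaussian (or Student-$t$) proposal of scale $\lambda_{k-1}\boldsymbol{\Sigma}_{k-1}$. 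Two observations structure everything that follows: $P_\Gamma$ depends on $\Gamma$ only through the pair $\eta:=(\lambda,\boldsymbol{\Sigma})$, the triple $(\mathbf m,\mathbf P,\nu)$ being an internal device that merely drives the update of $\boldsymbol{\Sigma}$; and, since any proposal falling outside $K$ is rejected, the chain stays in $K$. The strategy is then the classical one for adaptive MCMC \citep{Andrieu+Moulines:2006,Saksman+Vihola:2010}: prove a \emph{simultaneous} (uniform-in-$\Gamma$) ergodicity bound --- the ``strong containment'' mentioned in Section~\ref{sec:mcmc} --- together with a diminishing-adaptation bound whose per-step increments are summable after reweighting by $1/k$, and feed both into the standard martingale/Poisson-equation decomposition to obtain the almost-sure limit.

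The first and, I expect, the hardest step is to show that $\Gamma_k$ stays in a fixed compact $\mathcal E$. The truncations built into Algorithm~\ref{alg:ama} give $\mu_1\mathbf I\le\boldsymbol{\Sigma}_k\le\mu_2\mathbf I$ and $\lambda_k\in[\delta_\lambda,\delta_\lambda^{-1}]$ directly, and $\nu_k=\nu_0+k$ because $\rho=1$; the real content is a uniform two-sided bound on the Kalman quantities $\mathbf m_k,\mathbf P_k$ and on the inner VB iterates $\mathbf m_k^{(j)},\mathbf P_k^{(j)},\boldsymbol{\Sigma}_k^{(j)}$. This is precisely where uniform complete controllability and observability (Definitions~\ref{def:uco} and~\ref{def:ucc}) enter, through the classical exponential-stability theory of the Kalman filter \citep{Jazwinski:1970}: the ``observations'' $\mathbf y_k=\boldsymbol{\theta}_k$ lie in the compact set $K$, and the noise covariances $\boldsymbol{\Sigma}_k^{(j)}$ remain between positive multiples of $\mathbf I$ throughout the $N$ inner iterations --- from the last line of Algorithm~\ref{alg:vbakf} with $\mathbf B=\mathbf I$, each $\boldsymbol{\Sigma}_k^{(j)}$ is a convex combination of $\boldsymbol{\Sigma}_{k-1}\ge\mu_1\mathbf I$ and a bounded positive-semidefinite matrix --- so the innovation covariances are bounded below and the Kalman gains bounded above. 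The non-routine part is carrying the standard stability estimates through the non-standard VB-AKF recursion with its finitely many inner iterations; once this is done, the remaining steps are comparatively mechanical.

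With the compact range in hand, simultaneous ergodicity is short. Since $\pi\le B<\infty$ and $\supp\pi=K$ is bounded, and since the proposal density $q_\Gamma(\boldsymbol{\theta}_*\mid\boldsymbol{\theta})$ is bounded below by a constant $q_{\min}>0$ uniformly over $\boldsymbol{\theta},\boldsymbol{\theta}_*\in K$ and $\Gamma\in\mathcal E$ (a Gaussian or Student-$t$ density whose scale is bounded above and below, restricted to a bounded region), the elementary Metropolis estimate $\alpha(\boldsymbol{\theta},\boldsymbol{\theta}_*)\,q_\Gamma(\boldsymbol{\theta}_*\mid\boldsymbol{\theta})\ge(q_{\min}/B)\,\pi(\boldsymbol{\theta}_*)$ yields a one-step minorisation $P_\Gamma(\boldsymbol{\theta},\uarg)\ge\epsilon\,\Pi(\uarg)$ for every $\boldsymbol{\theta}\in K$ and $\Gamma\in\mathcal E$, with $\epsilon=q_{\min}/B>0$ independent of $\Gamma$ (here $\Pi(\ud\boldsymbol{\theta})=\pi(\boldsymbol{\theta})\,\ud\boldsymbol{\theta}$). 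Hence every $P_\Gamma$ is uniformly ergodic at a common geometric rate, $\|P_\Gamma^n(\boldsymbol{\theta},\uarg)-\Pi\|_{\mathrm{TV}}\le(1-\epsilon)^n$, and for bounded $f$ the Poisson equation $(I-P_\Gamma)g_\Gamma=f-\Pi f$ has the bounded solution $g_\Gamma=\sum_{n\ge0}(P_\Gamma^n f-\Pi f)$, with $\|g_\Gamma\|_\infty\le\|f\|_\infty/\epsilon$ and $\Gamma\mapsto g_\Gamma$ Lipschitz on $\mathcal E$ by the usual resolvent perturbation argument.

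For diminishing adaptation, the last line of Algorithm~\ref{alg:vbakf} together with $\boldsymbol{\Sigma}_k^-=\boldsymbol{\Sigma}_{k-1}$ and the bounds above gives $\|\boldsymbol{\Sigma}_k^{(j)}-\boldsymbol{\Sigma}_{k-1}\|=O(1/\nu_k)=O(1/k)$ uniformly in $j$, and the boundary-check branch merely resets the output to $\boldsymbol{\Sigma}_{k-1}$, so $\|\boldsymbol{\Sigma}_k-\boldsymbol{\Sigma}_{k-1}\|=O(1/k)$; moreover $|\lambda_k-\lambda_{k-1}|=O(\gamma_k)$ by the Robbins--Monro recursion~\eqref{eq:rma} and the truncation keeping $\lambda_k\in[\delta_\lambda,\delta_\lambda^{-1}]$, and $\lambda_k\equiv\lambda_0$ in the variant without $\lambda$-adaptation. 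Since $(\boldsymbol{\theta},\boldsymbol{\theta}_*,\eta)\mapsto q_\Gamma(\boldsymbol{\theta}_*\mid\boldsymbol{\theta})$ is Lipschitz and bounded below on $K\times K$ times the compact $\eta$-set, $\eta\mapsto P_\Gamma$ is Lipschitz in total variation, whence $\sup_{\boldsymbol{\theta}\in K}\|P_{\Gamma_k}(\boldsymbol{\theta},\uarg)-P_{\Gamma_{k-1}}(\boldsymbol{\theta},\uarg)\|_{\mathrm{TV}}\le C\,(1/k+\gamma_k)\to0$ and $\|P_{\Gamma_k}g_{\Gamma_k}-P_{\Gamma_{k-1}}g_{\Gamma_{k-1}}\|_\infty\le C'\,(1/k+\gamma_k)$ almost surely. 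One then assembles the claim from the decomposition
\[ \sum_{k=1}^n\bigl(f(\boldsymbol{\theta}_k)-\Pi f\bigr)=\sum_{k=1}^n M_k+\bigl(P_{\Gamma_0}g_{\Gamma_0}(\boldsymbol{\theta}_0)-P_{\Gamma_n}g_{\Gamma_n}(\boldsymbol{\theta}_n)\bigr)+\sum_{k=1}^n\bigl(P_{\Gamma_k}g_{\Gamma_k}(\boldsymbol{\theta}_k)-P_{\Gamma_{k-1}}g_{\Gamma_{k-1}}(\boldsymbol{\theta}_k)\bigr), \]
where $M_k=g_{\Gamma_{k-1}}(\boldsymbol{\theta}_k)-P_{\Gamma_{k-1}}g_{\Gamma_{k-1}}(\boldsymbol{\theta}_{k-1})$ is a bounded martingale-difference sequence for the natural filtration, so $n^{-1}\sum_{k\le n}M_k\to0$ almost surely by the martingale strong law; the telescoping middle term is $O(1/n)$; and the last sum is bounded in modulus by $C'\sum_{k\le n}(1/k+\gamma_k)$, whose $1/k$ part contributes $O(n^{-1}\log n)$ and whose $\gamma_k$ part has arithmetic means tending to $0$ because $\gamma_k\to0$. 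It is exactly at this point --- in upgrading the vanishing of this ``error'' term from convergence in probability to almost-sure convergence, via Kronecker's lemma applied to the reweighted increments $\sum_k k^{-1}(1/k+\gamma_k)$ --- that the hypothesis $\sum_k k^{-1}\gamma_k<\infty$ is used. Combining the three pieces yields $n^{-1}\sum_{k=1}^n f(\boldsymbol{\theta}_k)\to\int_{\R^d}f(\boldsymbol{\theta})\,\pi(\boldsymbol{\theta})\,\ud\boldsymbol{\theta}$ almost surely.
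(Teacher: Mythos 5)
Your proof is correct and rests on the same three pillars as the paper's: (i) uniform boundedness of the Kalman quantities via the uniform complete observability/controllability theory of \citet{Jazwinski:1970} together with the built-in truncations of $\boldsymbol{\Sigma}_k$ and $\lambda_k$ and the compact support of $\pi$ (the paper's Lemma~\ref{lem:boundedness}); (ii) the $O(a_k)=O(1/k)$ increment bound on $\boldsymbol{\Sigma}_k$ coming from $\rho=1$, $\mathbf{B}=\mathbf{I}$, $\nu_k=\nu_0+k$ (Lemma~\ref{lem:sigmadifference}), combined with $|\lambda_k-\lambda_{k-1}|=O(\gamma_k)$ to give a kernel difference of order $a_k+\gamma_k$ (Lemma~\ref{lem:kernel-diff}); and (iii) an adaptive-MCMC law of large numbers fed by $\sum_k k^{-1}(a_k+\gamma_k)<\infty$. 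Where you genuinely diverge is in step (iii) and in the ergodicity estimate: the paper outsources both, invoking Proposition 26(i) and Theorem 7 of \citet{vihola:2011} for the Lipschitz dependence of the kernel on $\lambda\boldsymbol{\Sigma}$ and the uniform minorisation, and then verifying conditions (A3)--(A5) of Corollary 2.8 in \citet{Fort+Moulines+Priouret:2011} with $V\equiv 1$; you instead prove the uniform one-step minorisation $P_\Gamma(\boldsymbol{\theta},\uarg)\ge(q_{\min}/B)\,\Pi(\uarg)$ directly from the boundedness of $\pi$ and the compactness of its support, and carry out the Poisson-equation/martingale decomposition by hand, disposing of the martingale term by the martingale strong law and of the adaptation term by Kronecker's lemma applied to $\sum_k k^{-1}(1/k+\gamma_k)<\infty$, which is exactly where the hypothesis $\sum_k k^{-1}\gamma_k<\infty$ enters in both treatments. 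Your route is self-contained and makes transparent why $V\equiv 1$ suffices here (simultaneous uniform ergodicity at a common rate on the compact support), at the cost of having to justify the Lipschitz continuity of $\Gamma\mapsto g_\Gamma$ via the resolvent perturbation argument, which the citation to Fort--Moulines--Priouret absorbs; the paper's route is shorter but hides the mechanism. One small remark: your uniform-in-$j$ bound $\|\boldsymbol{\Sigma}_k^{(j)}-\boldsymbol{\Sigma}_{k-1}\|=O(1/\nu_k)$ is actually slightly more faithful to Algorithm~\ref{alg:vbakf}, which stops after $N$ inner iterations, than the paper's Lemma~\ref{lem:sigmadifference}, which passes to the fixed point via Theorem~\ref{thm:vbak}; either version suffices.
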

Before proving Theorem~\ref{th:slln}, we introduce some concepts in
the filtering theory and some supporting lemmas that are used in the
proof. We briefly introduce the concepts of information matrix and
controllability matrix, as well as the concepts of uniform complete
observability and uniform complete controllability in terms of them.
These concepts are very well known in the field of statistical estimation
and filtering of stochastic processes \citep{Jazwinski:1970}, and they
ensure the conditions that the prior distribution is non-degenerate and
that the posterior covariance of the state is bounded and hence computing
an estimate of the state is possible in statistical sense.
\begin{definition}[Information matrix]
The information matrix $\digamma$ is defined as \citep{Jazwinski:1970}
\begin{eqnarray}
\digamma(M,M_0)=\sum_{k=M_0}^M
  \Phi^{T}(k,M) \, \mathbf{H}_k^T \,
\boldsymbol{\Sigma}_k^{-1} \, \mathbf{H}_k \,
  \Phi(k,M),
\end{eqnarray}
where $\Phi(k,M) = \mathbf{A}_{k-1}^{-1} \,\mathbf{A}_{k}^{-1} \, \mathbf{A}_{k+1}^{-1} \, \cdots \, \mathbf{A}_{M-2}^{-1} \, \mathbf{A}_{M-1}^{-1}$, for $k\leq M$.
\end{definition}
\begin{definition}[Controllabilty matrix]
The controllability matrix $\beth$  is defined as \citep{Jazwinski:1970}
\begin{eqnarray}
\beth(M,M_0)=\sum_{k=M_0}^{M}\Phi(M,k) \,
\mathbf{Q}_k \,\Phi^T(M,k),
\end{eqnarray}
where $\Phi(M,k) = \mathbf{A}_{M-1} \,\mathbf{A}_{M-2} \, \mathbf{A}_{M-3} \, \cdots \, \mathbf{A}_{k+1} \, \mathbf{A}_{k} \, \mathbf{A}_{k-1}$, for $k\leq M$.
\end{definition}
The state space model~\eqref{eq:001} and~\eqref{eq:002} is said to be
completely observable if $\digamma(M,1)>0$.  Similarly, it is
completely controllable if $\beth(M,1)>0$.
We now introduce the concepts of uniform complete observability and
uniform complete controllability as defined by \citep{Jazwinski:1970}.
These concepts are required in Lemma~\ref{lem:boundedness}.
\begin{definition}[Uniform complete observability] \label{def:uco}
  A system is uniformly completely observable if there exist a positive
  integer $L$, and constants $\beta_1, \beta_2 > 0$ such
  that for all $M\geq L$ we have
\begin{eqnarray}
  \beta_1 \, \mathbf{I}\leq \digamma(M,M-L) \leq \beta_2 \, \mathbf{I}.
\end{eqnarray}
\end{definition}
\begin{definition}[Uniform complete controllability] \label{def:ucc}
  A system is uniformly completely controllable if there exist a positive
  integer $L$, and constants $\beta_1, \beta_2 > 0$ such
  that for all $M\geq L$ we have
\begin{eqnarray}
\beta_1 \, \mathbf{I} \leq \beth(M,M-L) \leq \beta_2 \, \mathbf{I}.
\end{eqnarray}
\end{definition}
Having introduced the two important concepts for the proof, we next
state and prove the lemma which shows the boundedness of
$\mathbf{m}_k$ and $\mathbf{P}_k$. After that we state and prove the
lemma for diminishing of the difference of VBAM covariance matrices.
The two Lemmas \ref{lem:boundedness} and \ref{lem:sigmadifference} are
required in the proof of Theorem \ref{th:slln}.
\begin{lemma}
    \label{lem:boundedness}
    Assume that the state space system defined by
    Equations~\eqref{eq:001} and~\eqref{eq:002} is uniformly
    completely observable and uniformly completely controllable for
    any sequence $\boldsymbol{\Sigma}_k$ which is bounded below by
    $\mu_1\mathbf{I}$ and above by $\mu_2\mathbf{I}$. If
    $\mathbf{P}_0>0$, then $\mathbf{m}_k$ and $\mathbf{P}_k$ produced
    by the VBAM are uniformly bounded.
\end{lemma}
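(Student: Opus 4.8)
The plan is to reduce the claim to the classical stability theorem for the Kalman filter under uniform complete controllability and observability \citep{Jazwinski:1970}; the real work is to check that the quantities actually produced inside Algorithm~\ref{alg:vbakf}, as it is invoked from Algorithm~\ref{alg:ama}, satisfy the hypotheses of that theorem, i.e.\ that the recursion for $\mathbf{P}_k$ is a Riccati recursion driven by noise covariances confined to a fixed compact set of positive definite matrices, and that the ``measurements'' $\mathbf{y}_k=\boldsymbol{\theta}_k$ are bounded.

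\textbf{Reduction.} First I would show by induction on $k$ that the retained $\boldsymbol{\Sigma}_k$ satisfies $\mu_1\mathbf{I}\le\boldsymbol{\Sigma}_k\le\mu_2\mathbf{I}$: the base case is the choice of $\boldsymbol{\Sigma}_0$, and the induction step is exactly the boundary check in Algorithm~\ref{alg:ama}, which overwrites $\boldsymbol{\Sigma}_k$ by $\boldsymbol{\Sigma}_{k-1}$ whenever the bounds are violated. Since $\rho=1$ and $\mathbf{B}=\mathbf{I}$ (restriction~2), $\boldsymbol{\Sigma}_k^-=\boldsymbol{\Sigma}_{k-1}$ also lies in $[\mu_1\mathbf{I},\mu_2\mathbf{I}]$, and $\nu_k=\nu_0+k$, so all denominators $\nu_k-d-1\ge 1$ are bounded away from zero. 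Because $\pi$ has bounded support and $\boldsymbol{\theta}_k$ is either $\boldsymbol{\theta}_{k-1}$ or an accepted proposal (which necessarily has positive density), $\boldsymbol{\theta}_k$, and hence $\mathbf{y}_k$, stays in a fixed bounded set, giving $\|\mathbf{y}_k\|\le C$ for all $k$. Finally, $\mathbf{A}_k$, $\mathbf{Q}_k$, $\mathbf{H}_k$ are deterministic and uniformly bounded with $\mathbf{A}_k$ invertible and $\mathbf{Q}_k>0$, and the uniform complete controllability and observability hold by hypothesis for every admissible noise covariance sequence.

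\textbf{Boundedness of $\mathbf{P}_k$.} For the inner VB loop I would use the information-filter identity $(\mathbf{P}_k^{(j+1)})^{-1}=(\mathbf{P}_k^-)^{-1}+\mathbf{H}_k^{T}(\boldsymbol{\Sigma}_k^{(j)})^{-1}\mathbf{H}_k$, which is the Woodbury form of the update in Algorithm~\ref{alg:vbakf} and shows $\mathbf{0}<\mathbf{P}_k^{(j+1)}\le\mathbf{P}_k^-$ for every inner iterate, independently of the intermediate $\boldsymbol{\Sigma}_k^{(j)}$; hence $\mathbf{P}_k\le\mathbf{P}_k^-=\mathbf{A}_{k-1}\mathbf{P}_{k-1}\mathbf{A}_{k-1}^{T}+\mathbf{Q}_{k-1}$. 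Combined with the reduction above, $\mathbf{P}_k$ then obeys a Riccati recursion with noise covariances in the compact set $\{\boldsymbol{\Sigma}:\mu_1\mathbf{I}\le\boldsymbol{\Sigma}\le\mu_2\mathbf{I}\}$, so the classical boundedness result \citep{Jazwinski:1970} yields constants $0<\underline{p}\le\overline{p}<\infty$, depending only on $L$, $\beta_1$, $\beta_2$, $\mu_1$, $\mu_2$ and the bounds on $\mathbf{A}$, $\mathbf{Q}$, $\mathbf{H}$, with $\underline{p}\,\mathbf{I}\le\mathbf{P}_k\le\overline{p}\,\mathbf{I}$ for all $k\ge L$; for $k<L$ boundedness is immediate from $\mathbf{P}_0>0$ and finitely many bounded updates. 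The point that needs care is that $\mathbf{P}_k=\mathbf{P}_k^{(N)}$ is formed with $\boldsymbol{\Sigma}_k^{(N-1)}$, which is not a priori clipped to $[\mu_1\mathbf{I},\mu_2\mathbf{I}]$: the upper bound $\mathbf{P}_k\le\mathbf{P}_k^-$ is unaffected, and for the lower bound one uses that when the boundary check is not triggered the VB fixed-point iteration has brought $\boldsymbol{\Sigma}_k^{(N-1)}$ into an arbitrarily small neighbourhood of $\boldsymbol{\Sigma}_k^{(N)}\ge\mu_1\mathbf{I}$, while when it is triggered $\mathbf{P}_k$ is recomputed with $\boldsymbol{\Sigma}_{k-1}\ge\mu_1\mathbf{I}$ directly; in both cases the effective measurement-noise covariance in the final Riccati step lies in a fixed compact set of positive definite matrices.

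\textbf{Boundedness of $\mathbf{m}_k$.} Writing the posterior mean update as $\mathbf{m}_k=(\mathbf{I}-\mathbf{K}_k\mathbf{H}_k)\mathbf{A}_{k-1}\mathbf{m}_{k-1}+\mathbf{K}_k\mathbf{y}_k$, this is a linear time-varying recursion whose gains $\mathbf{K}_k=\mathbf{P}_k^-\mathbf{H}_k^{T}\mathbf{S}_k^{-1}$ are uniformly bounded (using the upper bound on $\mathbf{P}_k^-$ and $\mathbf{S}_k\ge\mu_1\mathbf{I}$), so the input $\mathbf{K}_k\mathbf{y}_k$ is bounded. By the classical result \citep{Jazwinski:1970}, uniform complete controllability and observability together with the two-sided bound on $\mathbf{P}_k$ make $V_k(\mathbf{x})=\mathbf{x}^{T}\mathbf{P}_k^{-1}\mathbf{x}$ a uniform Lyapunov function for the homogeneous part, so the closed-loop transition matrix is uniformly exponentially stable, and bounded-input bounded-state stability gives a uniform bound on $\|\mathbf{m}_k\|$ in terms of $\|\mathbf{m}_0\|$, $C$, $\sup_k\|\mathbf{K}_k\|$ and the stability constants. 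The genuinely non-routine step is the reduction together with the remark in the previous paragraph: the classical theory presupposes a given bounded noise-covariance sequence, whereas here $\boldsymbol{\Sigma}_k$ is generated by the algorithm and, inside the VB loop, passes through iterates that are not directly clipped, so one must argue that the actual recursions of Algorithm~\ref{alg:vbakf} still amount to a Riccati recursion with noise covariances in a fixed compact positive definite set. Once that is established, the lemma is exactly Jazwinski's boundedness and stability theorem.
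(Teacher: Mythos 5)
Your proof is correct and follows essentially the same route as the paper's: observe that the boundary check keeps $\boldsymbol{\Sigma}_k$ in $[\mu_1\mathbf{I},\mu_2\mathbf{I}]$ and the compact support keeps $\mathbf{y}_k=\boldsymbol{\theta}_k$ bounded, then reduce to a conventional Kalman filter and invoke Jazwinski's classical boundedness/stability results under uniform complete observability and controllability. Your version is merely more detailed, in particular in checking that the unclipped intermediate VB iterates $\boldsymbol{\Sigma}_k^{(j)}$ still keep the effective Riccati recursion within a fixed compact positive definite set, a point the paper's one-line reduction passes over.
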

\begin{proof}
  The sequence $\boldsymbol{\Sigma}_k$ produced by VBAM algorithm is always
  bounded below by $\mu_1\mathbf{I}$ and above by $\mu_2\mathbf{I}$
  by construction.
  Given the bounded sequence $\boldsymbol{\Sigma}_k$, the computation
  of the means $\mathbf{m}_k$ and covariances $\mathbf{P}_k$ reduces
  to conventional Kalman filtering. Because the state space model is
  uniformly completely observable and controllable, by Lemmas 7.1 and
  7.2, and Theorem 7.4 in \citet{Jazwinski:1970}, the mean and
  covariance sequences are uniformly bounded provided that the measurements
  are bounded. The measurements are the samples from the target distribution
  which is assumed to be compactly supported and thus are bounded.
\end{proof}
The VB-AKF algorithm itself contains a fixed point iteration and for the
VBAM algorithm to converge to something sensible, we need this
iteration to converge. This is ensured by Theorem~\ref{thm:vbak} which
is proved below.
\begin{theorem}
\label{thm:vbak}
The sequence
$\{\boldsymbol{\Sigma}_k^{(1)},\boldsymbol{\Sigma}_k^{(2)},\ldots,\boldsymbol{\Sigma}_k^{(j-1)},\boldsymbol{\Sigma}_k^{(j)},\boldsymbol{\Sigma}_k^{(j+1)},\ldots\}$ produced by the VB-AKF algorithm iteration converges to a below and above bounded matrix $\Sigma_k^{(j)} \to \Sigma_k$ with $j \to \infty$ provided that the system is uniformly completely observable and controllable, $\nu_{k}-d-1 > 0$, $\mathbf{m}_{k-1}$, $\mathbf{P}_{k-1}$, and $\Sigma_{k-1}$ are bounded from above and below.
\end{theorem}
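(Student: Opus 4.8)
The plan is to collapse the whole inner \textbf{Update} loop of Algorithm~\ref{alg:vbakf} into a single fixed-point recursion on the cone of symmetric positive-definite matrices. At a frozen outer index $k$ the quantities $\mathbf{m}_k^-,\mathbf{P}_k^-,\boldsymbol{\Sigma}_k^-,\nu_k,\mathbf{y}_k$ are constants, and $\mathbf{S}_k^{(j+1)},\mathbf{K}_k^{(j+1)},\mathbf{m}_k^{(j+1)},\mathbf{P}_k^{(j+1)}$ are explicit smooth functions of $\boldsymbol{\Sigma}_k^{(j)}$ alone; substituting them into \eqref{eq:vk} gives $\boldsymbol{\Sigma}_k^{(j+1)}=\Phi(\boldsymbol{\Sigma}_k^{(j)})$ with
\[
\Phi(\boldsymbol{\Sigma}) = c_1\,\boldsymbol{\Sigma}_k^- + c_2\,\mathbf{H}_k\,\mathbf{P}(\boldsymbol{\Sigma})\,\mathbf{H}_k^T + c_2\,\mathbf{e}(\boldsymbol{\Sigma})\,\mathbf{e}(\boldsymbol{\Sigma})^T,
\]
where $c_1=(\nu_{k-1}-d-1)/(\nu_k-d-1)$, $c_2=(\nu_k-d-1)^{-1}$, $\mathbf{S}=\mathbf{H}_k\mathbf{P}_k^-\mathbf{H}_k^T+\boldsymbol{\Sigma}$, $\mathbf{K}=\mathbf{P}_k^-\mathbf{H}_k^T\mathbf{S}^{-1}$, $\mathbf{P}(\boldsymbol{\Sigma})=\mathbf{P}_k^--\mathbf{K}\mathbf{S}\mathbf{K}^T$, and (using $\mathbf{I}-\mathbf{H}_k\mathbf{K}=\boldsymbol{\Sigma}\mathbf{S}^{-1}$) $\mathbf{e}(\boldsymbol{\Sigma})=\mathbf{y}_k-\mathbf{H}_k\mathbf{m}_k^{(j+1)}=\boldsymbol{\Sigma}\mathbf{S}^{-1}(\mathbf{y}_k-\mathbf{H}_k\mathbf{m}_k^-)$. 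By the positivity of the degrees-of-freedom parameters $c_1$ and $c_2$ are strictly positive finite numbers, so the theorem reduces to showing that the orbit of $\Phi$ started at $\boldsymbol{\Sigma}_k^{(1)}=\boldsymbol{\Sigma}_k^-$ converges to a matrix with eigenvalues bounded away from $0$ and $\infty$.

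The first step is to exhibit a compact set left invariant by $\Phi$. I would take $\mathcal{C}=\{\mathbf{X}:a\mathbf{I}\le\mathbf{X}\le b\mathbf{I}\}$. For the lower bound the last two terms of $\Phi$ are positive semidefinite, so $\Phi(\mathbf{X})\ge c_1\boldsymbol{\Sigma}_k^-$, and $\boldsymbol{\Sigma}_k^-=\mathbf{B}\boldsymbol{\Sigma}_{k-1}\mathbf{B}^T>0$ because $\mathbf{B}$ is invertible and $\boldsymbol{\Sigma}_{k-1}$ is bounded below; hence $a:=\min(c_1,1)\,\lambda_{\min}(\boldsymbol{\Sigma}_k^-)>0$ works. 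For the upper bound, $\mathbf{P}(\mathbf{X})\le\mathbf{P}_k^-$ (the update subtracts a positive semidefinite matrix), so $\mathbf{H}_k\mathbf{P}(\mathbf{X})\mathbf{H}_k^T\le\mathbf{H}_k\mathbf{P}_k^-\mathbf{H}_k^T$; on $\mathcal{C}$ one has $\mathbf{S}\ge\mathbf{X}\ge a\mathbf{I}$, so $\|\mathbf{S}^{-1}\|\le 1/a$, the gain $\|\mathbf{K}\|\le\|\mathbf{P}_k^-\|\|\mathbf{H}_k\|/a$ is bounded, and therefore $\|\mathbf{m}_k^{(j+1)}\|$ and hence $\|\mathbf{e}(\mathbf{X})\|$ are bounded by a constant $e_{\max}$; taking $b:=c_1\|\boldsymbol{\Sigma}_k^-\|+c_2\|\mathbf{H}_k\|^2\|\mathbf{P}_k^-\|+c_2 e_{\max}^2$ (enlarged if needed so that $\boldsymbol{\Sigma}_k^-\in\mathcal{C}$) yields $\Phi(\mathcal{C})\subseteq\mathcal{C}$. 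This is exactly where uniform complete observability and controllability and the assumed boundedness of $\mathbf{m}_{k-1},\mathbf{P}_{k-1},\boldsymbol{\Sigma}_{k-1}$ enter: together with the prediction recursions (cf. Lemma~\ref{lem:boundedness}) they guarantee that $\mathbf{m}_k^-,\mathbf{P}_k^-,\boldsymbol{\Sigma}_k^-$ are finite and that $\mathbf{P}_k^->0$, so $a$ and $b$ are genuine positive, finite constants.

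The final step is to upgrade ``$\Phi$ has a fixed point in $\mathcal{C}$'' — already given by Brouwer, since $\Phi$ is continuous and $\mathcal{C}$ is compact and convex — to ``the orbit converges''. I would aim to show $\Phi$ is a strict contraction on $\mathcal{C}$ in, say, the Frobenius norm: writing $\Phi(\mathbf{X}_1)-\Phi(\mathbf{X}_2)=c_2\bigl(\Psi(\mathbf{X}_1)-\Psi(\mathbf{X}_2)\bigr)$ with $\Psi(\mathbf{X})=\mathbf{H}_k\mathbf{P}(\mathbf{X})\mathbf{H}_k^T+\mathbf{e}(\mathbf{X})\mathbf{e}(\mathbf{X})^T$, one bounds $D\Psi$ uniformly on $\mathcal{C}$ using $D\mathbf{P}(\boldsymbol{\Sigma})[\mathbf{Y}]=\mathbf{K}\mathbf{Y}\mathbf{K}^T$ and $D\mathbf{e}(\boldsymbol{\Sigma})[\mathbf{Y}]=\mathbf{H}_k\mathbf{P}_k^-\mathbf{H}_k^T\mathbf{S}^{-1}\mathbf{Y}\mathbf{S}^{-1}(\mathbf{y}_k-\mathbf{H}_k\mathbf{m}_k^-)$, and then invokes the Banach fixed-point theorem once the resulting constant $c_2L$ is below one. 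The main obstacle is precisely this last inequality: the innocuous-looking rank-one term $\mathbf{e}\mathbf{e}^T$ is the culprit, since for $d\ge 2$ it destroys Loewner-monotonicity of $\Phi$ (the map $\mathbf{v}\mapsto\mathbf{v}\mathbf{v}^T$ is not operator-monotone), so the clean scalar-case monotone-convergence argument is unavailable and one must control the constant directly, exploiting $c_2=(\nu_k-d-1)^{-1}$ together with the bounds from $\mathcal{C}$. A more robust route, which I would fall back on if the contraction estimate proves too delicate, is to recognise the \textbf{Update} loop as a mean-field variational (coordinate-ascent) minimisation of the Kullback--Leibler divergence \eqref{eq:kldiv} within the Gaussian $\times$ inverse-Wishart family: each half-step does not increase the divergence, the divergence values therefore decrease to a limit, the iterates remain in the compact $\mathcal{C}$, every subsequential limit is a stationary point of this smooth objective, and one concludes by showing from the explicit first-order conditions (again using the bounds above) that the stationary point in $\mathcal{C}$ is unique, hence the whole orbit converges. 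Either way, $\mathcal{C}$ being closed, the limit $\boldsymbol{\Sigma}_k$ inherits $a\mathbf{I}\le\boldsymbol{\Sigma}_k\le b\mathbf{I}$, which is the asserted two-sided boundedness.
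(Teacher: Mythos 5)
Your reduction of the inner \textbf{Update} loop to a single self-map $\Phi$ on the positive-definite cone, and the explicit invariant compact set $\mathcal{C}$ yielding the two-sided bound $a\mathbf{I}\le\boldsymbol{\Sigma}_k\le b\mathbf{I}$, are correct and in fact more explicit than the paper, which only asserts that the hypotheses make the limiting statistics ``well-behaved''. The difficulty is the convergence step, where you offer two unfinished alternatives. The primary route (Banach) has a concrete gap that you flag yourself but do not close: the contraction constant $c_2L$ is not forced below one by the hypotheses. The theorem only assumes $\nu_k-d-1>0$; with the recommended initialisation $\nu_0=d+2$ and $\rho=1$ one already has $c_2=1/2$ at the first step, while the Lipschitz constant of the rank-one term on $\mathcal{C}$ is of order $\|\mathbf{S}^{-1}\|^2\,\|\mathbf{H}_k\mathbf{P}_k^-\mathbf{H}_k^T\|\,\|\mathbf{y}_k-\mathbf{H}_k\mathbf{m}_k^-\|^2$, which is bounded but not small under the stated assumptions. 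So Banach does not apply in general, and Brouwer gives existence of a fixed point but says nothing about convergence of the orbit, which is what the theorem claims.

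Your fallback route is essentially the paper's actual proof: the iteration is recognised as coordinate descent on the KL functional $\mathcal{J}[Q_{\mathbf{x}},Q_{\boldsymbol{\Sigma}}]$ of \eqref{eq:kldiv} over the Gaussian $\times$ inverse-Wishart family; the paper argues that $\mathcal{J}$ is convex in each argument, invokes the standard convergence of coordinate descent for such functionals, and then passes from convergence of the approximating densities to convergence of their parameters, using the assumed bounds to keep the limit nondegenerate. If you take this route you must still supply the ingredient you only gesture at, namely that the decreasing $\mathcal{J}$-values together with compactness of $\mathcal{C}$ force the whole orbit (not merely a subsequence) to a single stationary point; monotone decrease plus compactness alone gives only subsequential convergence, and the uniqueness of the stationary point in $\mathcal{C}$ is asserted in your sketch without proof. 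As written, neither of your two routes is complete, so the argument has a genuine gap at exactly the step where convergence of the iteration must be established.
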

\begin{proof}
Recall \citep{Sarkka+Nummenmaa:2009,Sarkka+Hartikainen:2013} that Equations \eqref{eq:qx_eq} and \eqref{eq:qs_eq} actually arise in the solution to the minimization of the KL divergence functional
\begin{align*}
  \mathcal{J}[Q_{\mathbf{x}},Q_{\boldsymbol{\Sigma}}]
&= \textrm{KL}\left[Q_\mathbf{x}(\mathbf{x}_k) \, Q_{\boldsymbol{\Sigma}}(\boldsymbol{\Sigma}_k)
\,||\,p(\mathbf{x}_k,\boldsymbol{\Sigma}_k \mid \mathbf{y}_{1:k})\right], \\
&= \int Q_\mathbf{x}(\mathbf{x}_k) \, Q_{\boldsymbol{\Sigma}}(\boldsymbol{\Sigma}_k)
  \, \log \left(
  \frac{Q_\mathbf{x}(\mathbf{x}_k) \, Q_{\boldsymbol{\Sigma}}(\boldsymbol{\Sigma}_k)}
       {p(\mathbf{x}_k,\boldsymbol{\Sigma}_k \mid \mathbf{y}_{1:k})}
   \right) \, \mathrm{d}\mathbf{x} \, \mathrm{d}\boldsymbol{\Sigma}_k,
\end{align*}
under the constraints that $Q_\mathbf{x}$ and $Q_{\boldsymbol{\Sigma}}$ integrate to unity. The minimization with respect to $Q_\mathbf{x}$ then gives \eqref{eq:qx_eq} and the minimization with respect to $Q_{\boldsymbol{\Sigma}}$ gives \eqref{eq:qs_eq}.

The fixed point iteration in VB-AKF is just an implementation of the following coordinate descend iteration with $j=1,2,3,\ldots$:
\begin{align*}
	Q^{(j+1)}_{\mathbf{x}}
	&= \arg \min_{Q_{\mathbf{x}}}
	\mathcal{J}[Q_{\mathbf{x}},Q_{\boldsymbol{\Sigma}}^{(j)}], \\
	Q^{(j+1)}_{\boldsymbol{\Sigma}}
	&= \arg \min_{Q_{\boldsymbol{\Sigma}}}
	\mathcal{J}[Q_{\mathbf{x}}^{(j+1)},Q_{\boldsymbol{\Sigma}}].
\end{align*}
It is now easy to show that $\mathcal{J}$ is a convex functional in both the arguments and hence the coordinate descend is guaranteed to converge \citep{Luenberger:2008}. When the probability density of a Gaussian distribution converges, its mean and covariance converge as well. Similarly, the convergence of the probability density of an inverse-Wishart distribution implies that its parameters convergence as well. The given conditions on the means and covariances ensure that the posterior distribution is well-behaved, which further ensures that the limiting approximating distributions and their statistics are well-behaved (finite and non-zero) as well. Hence the result follows.
\end{proof}
Next we will show that $\boldsymbol{\Sigma}_k -
\boldsymbol{\Sigma}_{k-1}$ converges to zero provided that we have
$\rho=1$ and $\mathbf{B} =\mathbf{I}$. Intuitively, the selection $\rho=1$ ensures that $a_k = 1 / (\nu_{k}-d-1)$ converges to zero and $\mathbf{B} =\mathbf{I}$ ensures
that the prediction step does not alter the converged covariance
matrix.
\begin{lemma}
    \label{lem:sigmadifference}
The sequences $\mathbf{m}_k$ and $\mathbf{P}_k$ produced by the
VBAM satisfy
\[
    \| \boldsymbol{\Sigma}_k - \boldsymbol{\Sigma}_{k-1} \|_F
    \le c \, a_k,
\]
where $c$ is a constant and $\|\cdot\|_F$ is the Frobenius norm.
\end{lemma}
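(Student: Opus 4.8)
The plan is to write down the converged fixed point equation for $\boldsymbol{\Sigma}_k$ from Algorithm~\ref{alg:vbakf} under the assumptions $\rho=1$ and $\mathbf{B}=\mathbf{I}$, and then estimate the Frobenius norm of the increment $\boldsymbol{\Sigma}_k-\boldsymbol{\Sigma}_{k-1}$ term by term. Since $\mathbf{B}=\mathbf{I}$ we have $\boldsymbol{\Sigma}_k^-=\boldsymbol{\Sigma}_{k-1}$, and since $\rho=1$ the update of the degrees of freedom is simply $\nu_k-d-1 = \nu_{k-1}-d-1+1$, so that $(\nu_{k-1}-d-1)/(\nu_k-d-1) = 1 - a_k$ with $a_k = 1/(\nu_k-d-1)$. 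Substituting into the converged version of~\eqref{eq:vk} gives
\[
  \boldsymbol{\Sigma}_k
  = (1-a_k)\,\boldsymbol{\Sigma}_{k-1}
    + a_k\Bigl[\mathbf{H}_k\,\mathbf{P}_k\,\mathbf{H}_k^T
      + (\mathbf{y}_k - \mathbf{H}_k\,\mathbf{m}_k)(\mathbf{y}_k - \mathbf{H}_k\,\mathbf{m}_k)^T\Bigr],
\]
where $\mathbf{m}_k$, $\mathbf{P}_k$ are the converged values (which exist and are bounded by Theorem~\ref{thm:vbak}). Hence
\[
  \boldsymbol{\Sigma}_k - \boldsymbol{\Sigma}_{k-1}
  = a_k\Bigl[\mathbf{H}_k\,\mathbf{P}_k\,\mathbf{H}_k^T
      + (\mathbf{y}_k - \mathbf{H}_k\,\mathbf{m}_k)(\mathbf{y}_k - \mathbf{H}_k\,\mathbf{m}_k)^T
      - \boldsymbol{\Sigma}_{k-1}\Bigr].
\]

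The next step is to bound the bracketed quantity uniformly in $k$. I would use: (i) $\|\mathbf{H}_k\|$ is bounded because the $\mathbf{H}_k$ come from a fixed state space model (or a uniformly bounded family of them); (ii) $\|\mathbf{P}_k\|_F$ is uniformly bounded by Lemma~\ref{lem:boundedness}; (iii) $\mathbf{y}_k = \boldsymbol{\theta}_k$ lies in the support of $\pi$, which is compact, hence $\|\mathbf{y}_k\|$ is uniformly bounded, and $\|\mathbf{m}_k\|$ is uniformly bounded by Lemma~\ref{lem:boundedness}; and (iv) $\|\boldsymbol{\Sigma}_{k-1}\|_F \le \sqrt{d}\,\mu_2$ by the projection step in the VBAM algorithm. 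Combining these with the triangle inequality and submultiplicativity of the norms yields a constant $c$, depending only on $\mu_2$, the diameter of $\operatorname{supp}\pi$, the bound on the $\mathbf{H}_k$, and the uniform bound on $\mathbf{P}_k$, such that the bracket has Frobenius norm at most $c$, giving $\|\boldsymbol{\Sigma}_k-\boldsymbol{\Sigma}_{k-1}\|_F \le c\,a_k$ as claimed.

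Two subtleties need a brief word. First, the displayed increment uses the \emph{converged} fixed point of the inner iteration; in the algorithm one runs only $N$ iterations, so strictly one should either assume $N=\infty$ (the iteration converges by Theorem~\ref{thm:vbak}) or absorb the $O(\cdot)$ difference between $\boldsymbol{\Sigma}_k^{(N)}$ and its limit into the constant, noting it is also $O(a_k)$ since the whole update term is premultiplied by $a_k$. Second, one must handle the boundary-projection branch of Algorithm~\ref{alg:ama}: if the boundary check fails, then $\boldsymbol{\Sigma}_k$ is explicitly set to $\boldsymbol{\Sigma}_{k-1}$, so the increment is exactly zero and the bound holds trivially. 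The main obstacle — though it is more bookkeeping than real difficulty — is propagating the uniform bounds from Lemma~\ref{lem:boundedness} and Theorem~\ref{thm:vbak} cleanly and making sure every constant entering $c$ is genuinely independent of $k$; the role of $\rho=1$ and $\mathbf{B}=\mathbf{I}$ is exactly what makes the leading terms cancel so that only the $a_k$-scaled remainder survives.
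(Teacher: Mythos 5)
Your proposal is correct and follows essentially the same route as the paper's proof: both write the converged fixed-point update with $\rho=1$, $\mathbf{B}=\mathbf{I}$ as $\boldsymbol{\Sigma}_k-\boldsymbol{\Sigma}_{k-1}=a_k\bigl(\mathbf{H}_k\mathbf{P}_k\mathbf{H}_k^T+\Xi_k\Xi_k^T-\boldsymbol{\Sigma}_{k-1}\bigr)$, bound each term uniformly via Lemma~\ref{lem:boundedness} and Theorem~\ref{thm:vbak}, and dispose of the projection branch by noting the increment is then zero. Your explicit remark about the gap between the finite-$N$ iterate and the fixed point is a point the paper passes over silently, but it does not change the argument.
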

\begin{proof}
  We will start by considering the case where in Algorithm
  \ref{alg:ama}, we have resorted to truncation and have set
  $\boldsymbol{\Sigma}_k=\boldsymbol{\Sigma}_{k-1}$. Because this
  implies that $\|\boldsymbol{\Sigma}_k-\boldsymbol{\Sigma}_{k-1}\|_F=0$,
  the condition is trivially satisfied. In the case that
  the truncation is not done we proceed as follows.

  Let's denote $\Xi_k = \mathbf{y}_k-\mathbf{H} \, \mathbf{m}_k$.
  Since by Lemma~\ref{lem:boundedness}, $\boldsymbol{\Sigma}_k$,
  $\mathbf{m}_k$ and $\mathbf{P}_k$ are bounded, we have $\|
  \boldsymbol{\Sigma}_k \|_F \leq c_{\boldsymbol{\Sigma}}$, $\|\Xi_k
  \, \Xi_k^T\|_F\leq c_{\Xi}$ and $\|\mathbf{H}_k \, \mathbf{P}_k \,
  \mathbf{H}^T_k\|_F \leq c_{\mathbf{P}}$ for some constants
  $c_{\boldsymbol{\Sigma}}$, $c_{\Xi}$ and $c_{\mathbf{P}}$.
  According to Theorem~\ref{thm:vbak} the iteration in VB-AKF
  converges and thus we can assume $\boldsymbol{\Sigma}_k$ is the
  solution to the fixed point equation.  If we set $\mathbf{B} =
  \mathbf{I}$, Equation~\eqref{eq:vk} then gives
\begin{align}
	 \|\boldsymbol{\Sigma}_k - \boldsymbol{\Sigma}_{k-1}\|_F
    &=\|-a_k \, \boldsymbol{\Sigma}_{k-1} + a_k \, \Xi_k \, \Xi_k^T
  + a_k \, \mathbf{H}_k \, \mathbf{P}_k \, \mathbf{H}_k^T\|_F,\nonumber\\
	&\leq a_k \, \left (\|
  \boldsymbol{\Sigma}_k \|_F + \|\Xi_k\Xi_k^T\|_F
  + \|\mathbf{H}_k\mathbf{P}_k\mathbf{H}^T_k\|_F \right),\nonumber\\
	&\leq a_k(c_{\boldsymbol{\Sigma}} + c_{\Xi}+c_{\mathbf{P}}). \nonumber \qedhere
\end{align}
\end{proof}
Next we state and prove the lemma which shows the boundedness of the difference
of transition probabilities. Let us denote by
$\Pi_{\lambda,\boldsymbol{\Sigma}}$
the Markov transition probability
of a random-walk Metropolis algorithm with the increment
proposal distribution $\mathbb{N}(0, \lambda \boldsymbol{\Sigma})$ .
\begin{lemma}
    \label{lem:kernel-diff}
    For the VBAM algorithm, there exists a constant $c_D<\infty$ such
    that
\[
    \| \Pi_{\lambda_k \boldsymbol{\Sigma}_k}
   - \Pi_{\lambda_{k-1} \boldsymbol{\Sigma}_{k-1}} \|
\defeq
\sup_{\boldsymbol{\theta}} \sup_{|f|\le 1} | \Pi_{\lambda_k \boldsymbol{\Sigma}_k} f(\boldsymbol{\theta}) -
    \Pi_{\lambda_{k-1} \boldsymbol{\Sigma}_{k-1}}
      f(\boldsymbol{\theta}) |
    \le c_D \, (a_k + \gamma_k),
\]
where $\Pi_{\lambda\boldsymbol{\Sigma}} f(\boldsymbol{\theta})
\defeq \int f(\theta') \,
\Pi_{\lambda\boldsymbol{\Sigma}}(\boldsymbol{\theta},\ud \theta')$,
and the latter supremum is taken with respect to measurable functions.
\end{lemma}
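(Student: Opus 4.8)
The plan is to reduce the bound on the transition-kernel difference to a bound on the difference of the two proposal covariances $\lambda_k\boldsymbol{\Sigma}_k$ and $\lambda_{k-1}\boldsymbol{\Sigma}_{k-1}$, and then invoke Lemma~\ref{lem:sigmadifference} together with the $\lambda$-update rule~\eqref{eq:rma}. First I would recall that for a random-walk Metropolis kernel with Gaussian (or Student's $t$) increment density $q_{\lambda\boldsymbol{\Sigma}}$, the action on a bounded test function $f$ with $|f|\le 1$ can be written explicitly as
\[
\Pi_{\lambda\boldsymbol{\Sigma}} f(\boldsymbol{\theta})
= \int \alpha(\boldsymbol{\theta},\boldsymbol{\theta}')\,
 f(\boldsymbol{\theta}')\, q_{\lambda\boldsymbol{\Sigma}}(\boldsymbol{\theta}'-\boldsymbol{\theta})\,\ud\boldsymbol{\theta}'
+ \Bigl(1 - \int \alpha(\boldsymbol{\theta},\boldsymbol{\theta}')\, q_{\lambda\boldsymbol{\Sigma}}(\boldsymbol{\theta}'-\boldsymbol{\theta})\,\ud\boldsymbol{\theta}'\Bigr) f(\boldsymbol{\theta}),
\]
where $\alpha(\boldsymbol{\theta},\boldsymbol{\theta}') = \min\{1,\pi(\boldsymbol{\theta}')/\pi(\boldsymbol{\theta})\}$ does not depend on the proposal covariance. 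Since the acceptance ratio is common to both kernels, taking the difference $\Pi_{\lambda_k\boldsymbol{\Sigma}_k}f - \Pi_{\lambda_{k-1}\boldsymbol{\Sigma}_{k-1}}f$ makes all $f(\boldsymbol{\theta}')$- and $f(\boldsymbol{\theta})$-terms appear against the \emph{difference} $q_{\lambda_k\boldsymbol{\Sigma}_k} - q_{\lambda_{k-1}\boldsymbol{\Sigma}_{k-1}}$ of the two proposal densities; bounding $|f|\le 1$ and using $0\le\alpha\le 1$, everything collapses to
\[
\| \Pi_{\lambda_k\boldsymbol{\Sigma}_k} - \Pi_{\lambda_{k-1}\boldsymbol{\Sigma}_{k-1}} \|
\le 2 \sup_{\boldsymbol{\theta}} \int \bigl| q_{\lambda_k\boldsymbol{\Sigma}_k}(\mathbf{x}) - q_{\lambda_{k-1}\boldsymbol{\Sigma}_{k-1}}(\mathbf{x}) \bigr|\,\ud\mathbf{x}
= 2\, \| q_{\lambda_k\boldsymbol{\Sigma}_k} - q_{\lambda_{k-1}\boldsymbol{\Sigma}_{k-1}} \|_{1},
\]
i.e. twice the total variation distance between two mean-zero Gaussians (the translation by $\boldsymbol{\theta}$ drops out under the change of variables $\mathbf{x}=\boldsymbol{\theta}'-\boldsymbol{\theta}$).

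The second step is to bound this $L^1$ distance by the difference of the covariance matrices. Here I would use the fact that the map $\boldsymbol{\Gamma}\mapsto q_{\boldsymbol{\Gamma}}$ from covariance matrices to Gaussian densities is locally Lipschitz in $L^1$ on the set $\mu_1\mathbf{I}\le\boldsymbol{\Gamma}\le\mu_2\mathbf{I}$ (scaled by $\delta_\lambda,\delta_\lambda^{-1}$): the entries of $\boldsymbol{\Gamma}^{-1}$ and the factor $|\boldsymbol{\Gamma}|^{-1/2}$ are smooth functions of $\boldsymbol{\Gamma}$ with derivatives uniformly bounded on that compact set, and differentiating $q_{\boldsymbol{\Gamma}}(\mathbf{x})$ under the integral sign (all integrals of polynomial-times-Gaussian being finite and uniformly bounded there) gives $\| q_{\boldsymbol{\Gamma}_1} - q_{\boldsymbol{\Gamma}_2} \|_1 \le C \| \boldsymbol{\Gamma}_1 - \boldsymbol{\Gamma}_2 \|_F$ for a constant $C$ depending only on $\mu_1,\mu_2,\delta_\lambda,d$. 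Then
\[
\| \lambda_k\boldsymbol{\Sigma}_k - \lambda_{k-1}\boldsymbol{\Sigma}_{k-1} \|_F
\le \lambda_k \|\boldsymbol{\Sigma}_k - \boldsymbol{\Sigma}_{k-1}\|_F
+ |\lambda_k - \lambda_{k-1}|\, \|\boldsymbol{\Sigma}_{k-1}\|_F
\le \delta_\lambda^{-1} c\, a_k + |\lambda_k - \lambda_{k-1}|\, \mu_2\sqrt{d},
\]
using Lemma~\ref{lem:sigmadifference} for the first term. For the $\lambda$ increment, the Robbins--Monro rule~\eqref{eq:rma} gives $|\log\lambda_k - \log\lambda_{k-1}| = \gamma_k|\alpha_k-\overline{\alpha}| \le \gamma_k$, and since $\lambda$ is truncated to $[\delta_\lambda,\delta_\lambda^{-1}]$ the exponential is Lipschitz there, so $|\lambda_k-\lambda_{k-1}|\le \delta_\lambda^{-1}\gamma_k$ (the truncation only contracts distances, so it does no harm). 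If the $\lambda$-update is omitted, that term is simply absent. Combining, $\| \lambda_k\boldsymbol{\Sigma}_k - \lambda_{k-1}\boldsymbol{\Sigma}_{k-1} \|_F \le c_D'(a_k+\gamma_k)$, and chaining through the two Lipschitz estimates yields the claimed bound with $c_D = 2C c_D'$.

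The main obstacle is the second step — establishing the $L^1$-Lipschitz continuity of the Gaussian family in its covariance parameter with a constant uniform over the feasible set. This requires care because $q_{\boldsymbol{\Gamma}}$ depends on $\boldsymbol{\Gamma}$ through both $\boldsymbol{\Gamma}^{-1}$ in the exponent and $|\boldsymbol{\Gamma}|^{-1/2}$ in the normalizer, and one must justify differentiating under the integral sign and bound $\int |\mathbf{x}|^2 e^{-\mathbf{x}^T\boldsymbol{\Gamma}^{-1}\mathbf{x}/2}\,\ud\mathbf{x}$-type quantities uniformly; the lower bound $\boldsymbol{\Gamma}\ge \delta_\lambda\mu_1\mathbf{I}$ keeps the densities from concentrating and the upper bound $\boldsymbol{\Gamma}\le \delta_\lambda^{-1}\mu_2\mathbf{I}$ keeps $|\boldsymbol{\Gamma}|$ away from zero and tails integrable, so this is where assumptions~(3)--(5) of the VBAM construction are really used. (If the paper prefers, one can instead bound the Hellinger or TV distance between the two Gaussians directly via the KL divergence $\mathrm{KL}(q_{\boldsymbol{\Gamma}_1}\|q_{\boldsymbol{\Gamma}_2}) = \tfrac12[\mathrm{tr}(\boldsymbol{\Gamma}_2^{-1}\boldsymbol{\Gamma}_1) - d + \log(|\boldsymbol{\Gamma}_2|/|\boldsymbol{\Gamma}_1|)]$ and Pinsker's inequality, which again is $O(\|\boldsymbol{\Gamma}_1-\boldsymbol{\Gamma}_2\|_F)$ on the compact feasible set — this is probably the cleanest route.) The Student's $t$ case is handled identically, since its density is also a smooth, uniformly integrable function of the scale matrix on the same compact set.
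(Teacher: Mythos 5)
Your proof is correct and follows the same overall skeleton as the paper's: reduce the kernel difference to $\|\lambda_k\boldsymbol{\Sigma}_k-\lambda_{k-1}\boldsymbol{\Sigma}_{k-1}\|_F$, split that into a $(\lambda_k-\lambda_{k-1})$ term and a $(\boldsymbol{\Sigma}_k-\boldsymbol{\Sigma}_{k-1})$ term, bound the former by $O(\gamma_k)$ from the truncated Robbins--Monro recursion and the latter by $O(a_k)$ from Lemma~\ref{lem:sigmadifference}. The one place you diverge is the key Lipschitz estimate for the kernel in its proposal covariance: the paper simply invokes Proposition 26(i) of \citet{vihola:2011}, which delivers $\|\Pi_{\boldsymbol{\Gamma}_1}-\Pi_{\boldsymbol{\Gamma}_2}\|\le c_\Delta\|\boldsymbol{\Gamma}_1-\boldsymbol{\Gamma}_2\|_F$ uniformly over covariances with eigenvalues in a compact subset of $(0,\infty)$, whereas you re-derive this from scratch by writing out the Metropolis kernel, cancelling the common acceptance ratio to get the bound $2\sup_{\boldsymbol{\theta}}\int|q_{\boldsymbol{\Gamma}_1}-q_{\boldsymbol{\Gamma}_2}|$, and then establishing $L^1$-Lipschitz continuity of the Gaussian family on the feasible set (your Pinsker/KL route is indeed the cleanest way to finish that). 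Your version is more self-contained and makes explicit exactly where the uniform eigenvalue bounds from restrictions (4)--(5) of the VBAM construction enter; the paper's version is shorter and inherits the uniformity statement ready-made from the cited result. Both decompositions of the covariance increment (yours factors out $\lambda_k$, the paper's factors out $\lambda_{k-1}$) are equivalent since both quantities are uniformly bounded. No gaps.
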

\begin{proof}
    By construction, the eigenvalues of $\lambda_k \, \boldsymbol{\Sigma}_k$
    are bounded and
    bounded away from zero, uniform in $k$.
    Therefore, Proposition 26(i) of \cite{vihola:2011} implies the
    existence of a constant $c_\Delta<\infty$ such that
\begin{align*}
| \Pi_{\lambda_k \, \boldsymbol{\Sigma}_k} f(\boldsymbol{\theta}) -
    \Pi_{\lambda_{k-1} \, \boldsymbol{\Sigma}_{k-1}}
      f(\boldsymbol{\theta}) |
    &\le c_\Delta \| \lambda_k \, \boldsymbol{\Sigma}_k
    - \lambda_{k-1} \, \boldsymbol{\Sigma}_{k-1} \|_F, \\
    &= c_\Delta
    \|(\lambda_k-\lambda_{k-1}) \, \boldsymbol{\Sigma}_{k}
  + \lambda_{k-1} \, (\boldsymbol{\Sigma}_k - \boldsymbol{\Sigma}_{k-1})\|_F,\\
    &\leq c_\Delta | \lambda_k - \lambda_{k-1}| \,
    \|\boldsymbol{\Sigma}_{k}\|_F
    + \lambda_{k-1} \, \|\boldsymbol{\Sigma}_k-\boldsymbol{\Sigma}_{k-1}\|_F.
\end{align*}
Both $\|\boldsymbol{\Sigma}_{k}\|_F$ and $\lambda_{k-1}$ are bounded,
and an easy computation shows that $|\lambda_k - \lambda_{k-1}| \le
c_\lambda \gamma_k$ for some constant $c_\lambda=c_\lambda(\beta_0,\beta_1)<\infty$,
because $(\lambda_k)_{k\ge 1}$ is
bounded. The proof is concluded by applying Lemma~\ref{lem:sigmadifference}
to bound $\|\boldsymbol{\Sigma}_k-\boldsymbol{\Sigma}_{k-1}\|_F$.
\end{proof}
Finally, we give the proof of Theorem~\ref{th:slln}.
\begin{proof}[Proof of Theorem~\ref{th:slln}]
We use Corollary 2.8 in \citet{Fort+Moulines+Priouret:2011},
with $\Theta_n = \lambda_n \, \boldsymbol{\Sigma}_n$,
$V\equiv 1$ and $\pi_{\Theta_n}=\pi$.
We need to check that the conditions (A3)--(A5) are
satisfied. It is easy to show that (A3) holds for example with
$\lambda_\Theta=b_\Theta = 1/2$, because the eigenvalues of feasible
covariance matrices $\Theta$
are bounded and bounded away from zero by construction,
and one can find $\delta_\Theta\ge \delta>0$ (see Theorem 7 in \citep{vihola:2011}).
Therefore, $\sup_\Theta L_\Theta<\infty$
and (A5) holds trivially. For (A4), it is enough to observe that
\[
    \sum_{k=1}^\infty k^{-1} \| \Pi_{\Theta_k} - \Pi_{\Theta_{k-1}} \|
    \le
    \sum_{k=1}^\infty k^{-1} c_P
    \Big(\gamma_k + a_k \Big)
    < \infty,
\]
where we have used Lemma~\ref{lem:kernel-diff}, our assumption on
$\gamma_k$ and the
fact that $\sup_{k\ge 1} k \, a_k<\infty$, because
$\nu_k$ increases linearly if we select
$\rho = 1$ which gives $\nu_k = \nu_0 + k$.
\end{proof}
\begin{remark}
  We note that our result generalises immediately if we replace the
  Gaussian proposal distribution with a multivariate Student's
  $t$-distribution.  It is possible to elaborate our result by
  omitting the bounds for the scaling adaptation by additional
  regularity conditions on $\pi$. This can be achieved by showing
  first the stability of the scaling adaptation 
  \cite{vihola:2011}.
\end{remark}

%
%
\section{Numerical Results} \label{sec:nume}
In this section, the convergence of the algorithm is assessed
empirically\footnote{The Matlab codes can be obtained from the
  corresponding author on request.}. We first present three simulated examples which are
often used in literature to study the performance and convergence of
adaptive MCMC algorithms. In the first example, we compare our proposed
VBAM algorithm with the AM algorithm proposed by
\citet{Haario+Saksman+Tamminen:1999,Haario+Saksman+Tamminen:2001}
using an example from the articles. We use a Gaussian random walk model as the state space model in the VBAM.
In the second example, we apply AM and VBAM to the
100-dimensional example from \citet{Roberts+Rosenthal:2009}.
In the third example, we apply VBAM to a well-known
benchmark of sampling from a 20-dimensional banana shaped distribution and compare
the empirical results with the AM results of \citet{Roberts+Rosenthal:2009}.
Finally, we apply the VBAM algorithm to two real data examples. In those examples
we analyze the chemical reaction model found in \citet{Himmelblau:1970}
and the bacteria growth model found in \citet{Brown+Berthouex:2002}.
\subsection{One-dimensional projection of the density function}
In this example, we consider a one-dimensional projection of 
the density function similar to that of \citet{Haario+Saksman+Tamminen:2001}. 
We aim to sample
from a density $\pi$ on the rectangle $R=[-18,18]\times[-3,3]\subset
\mathbb{R}^2$ as follows. Let $S=[-0.5,0.5]\times[-3,3]$ and set
\[ \pi(x) = \left\{ \begin{array}{ll}
1, & \text{if $x\in S$},\\
36,& \text{if $x\in R\setminus S$}.\end{array} \right. \]

For the AM algorithm, we initialized the proposal covariance as
$\boldsymbol{\Sigma}_0 = \mathbf{I}$ and updated it using
Equation~\eqref{eq:covc} with the values of $\epsilon$ and
$\lambda_k$ set to $0.0001$ and $2.8322$, respectively.
For the VBAM algorithm we used a random walk model defined as
\begin{equation}\label{eq:RWM}
\begin{split}
\mathbf{x}_k &= \mathbf{x}_{k-1} + \mathbf{q}_{k-1},\\
\mathbf{y}_k &= \mathbf{x}_{k} + \mathbf{r}_{k},
\end{split}
\end{equation}
where $\mathbf{A} = \mathbf{I}$ and $\mathbf{Q} = 0.001^2 \, \mathbf{I}$,
$\mathbf{H} = \mathbf{I}$. It is easy to show that this
model is uniformly completely observable and controllable for any
bounded sequence of $\boldsymbol{\Sigma}_k$. We initialized the prior
mean and covariance as $\mathbf{m}_0 = \begin{bmatrix} 0 & 0
\end{bmatrix}^T$ and $\mathbf{P}_0 = \mathbf{I}$ while we set
$\nu_0=4$.

\begin{figure}[htb]
\centering
\includegraphics{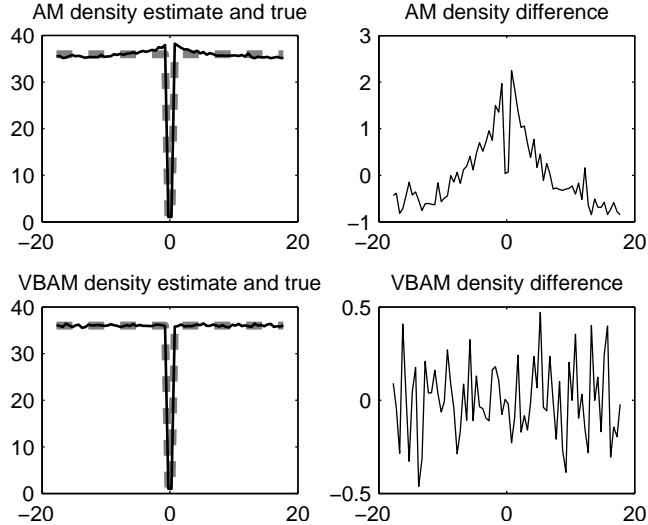}
\caption{Left column: the upper plot is the adaptive Metropolis (AM)
  density and the lower plot is the variational Bayesian adaptive
  Metropolis (VBAM) density.  The dashed line shows the true density
  while the black line shows the approximated density from samples.
  Right column: the upper plot shows the AM density difference between
  the true and approximated densities while the lower plot shows the
  density difference for the VBAM method.  The density difference for the
  VBAM algorithm fluctuates around zero while the AM density
  difference does not. Therefore VBAM algorithm performs better than
  AM.}
\label{fig:2}
\end{figure}

To compare the performance of AM and VBAM algorithms, we generated
$10^6$ samples from each algorithm. As in \citet{Haario+Saksman+Tamminen:2001}, the comparison is done through
computing the density differences of each method and the results are
shown in Figure~\ref{fig:2}. The left column plots show the empirical
densities produced by the adaptive Metropolis (AM) algorithm
and the proposed variational
Bayesian adaptive Metropolis (VBAM) algorithm, respectively.  The
right column shows, for each method, the difference between the real
target density and the empirical densities. The VBAM algorithm indeed
seems to give samples that follow well the true distribution, that is,
the empirical density approximates well the true density. Because the
density differences are around zero the results show the VBAM
algorithm performs better than the AM algorithm.

\subsection{100-dimensional Gaussian target distribution}
In this example, we consider a high-dimensional Gaussian target
distribution as discussed by \citet{Roberts+Rosenthal:2009}. Here, the
target distribution is $\mathbb{N}(0,MM^T)$, where the entries of the
$d\times d$ matrix $M$ are generated from unit Gaussian distributions
\citep{Roberts+Rosenthal:2009}. We compare our results with a version of
the AM algorithm considered in the article using the
C-language implementation (adaptchol.c) provided by the authors.
For the experiment we also implemented a C-language version of our VBAM
adaptation so that the only difference in the runs is the covariance adaptation.

In the AM algorithm of \citet{Roberts+Rosenthal:2009}, at iteration
$k$, the proposal distribution is defined as
\begin{equation}\label{eq:RR_proposal}
q_k(\theta_* \mid \theta) = \begin{cases}
\mathbb{N}\left(\theta,\frac{0.1^2}{d}\mathbf{I}\right), & \text{if $k\leq 2d$},\\
(1-\beta) \, \mathbb{N}\left(\theta,\frac{2.38^2}{d}\boldsymbol{\Sigma}_k\right)
+\beta \, \mathbb{N}\left(\theta,\frac{0.1^2}{d}\mathbf{I}\right),
&
\text{if $k > 2d$},\end{cases}
\end{equation}
where $\boldsymbol{\Sigma}_k$ is the current empirical covariance
and $\beta$ is a small positive constant. In the numerical experiment,
we chose the dimensionality to be $d=100$ and $\beta=0.05$ following
\citet{Roberts+Rosenthal:2009}. For the VBAM algorithm, the state space model used is the
Gaussian random walk model \eqref{eq:RWM} with $\mathbf{A} = \mathbf{I}$, $\mathbf{H} = \mathbf{I}$,
and $\mathbf{Q} = 10^{-9} \, \mathbf{I}$, and $\lambda_k$
is updated using
Equation~\eqref{eq:rma}. We initialized $\lambda_0=2.38^2/2$, the
value of $\overline{\alpha}$ was 0.234 and we used the following
$\gamma_k$ \citep{Liang+Liu+Carroll:2010}:
$$\gamma_k \, = \, \frac{k_0}{\max\{k_0,k^\tau\}},$$
where $k_0>1$ is the pre-specified value and $\tau\in(1/2 \,,\, 1]$. The
used values for $k_0$ and $\tau$ were 1000 and 0.99, respectively.

\begin{figure}[htb]
\centering
\includegraphics{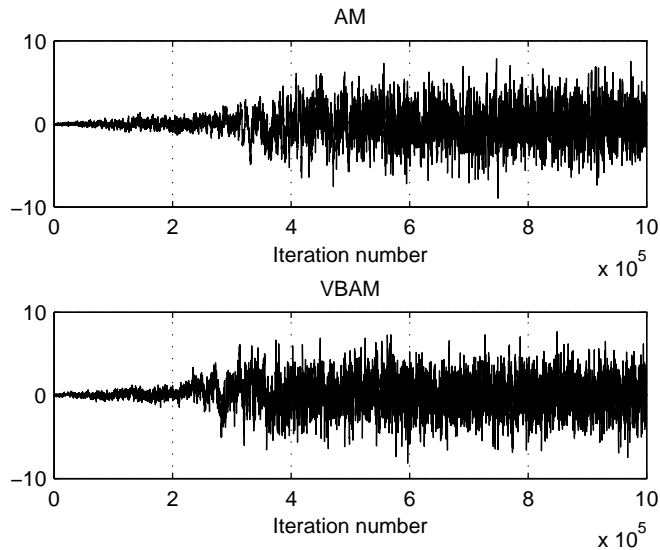}
\caption{First coordinate of the MCMC chains in the 100-dimensional Gaussian
target distribution example.}
\label{fig:x1_chains}
\end{figure}

\begin{figure}[htb]
\centering
\includegraphics{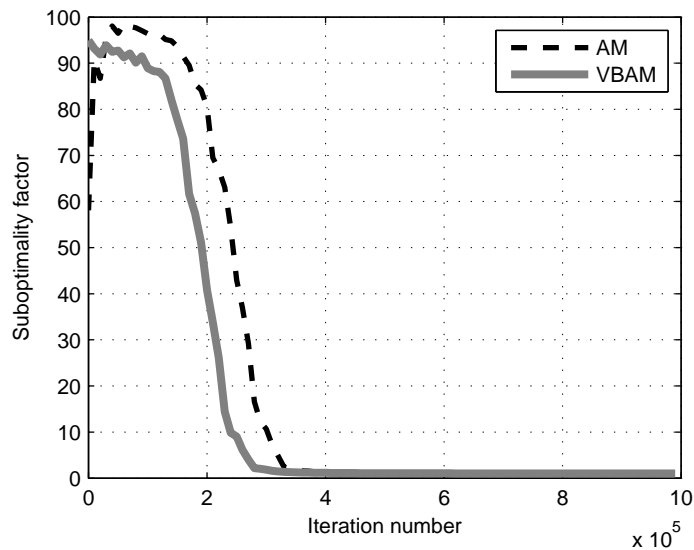}
\caption{Suboptimality factors in the Gaussian target distribution example.}
\label{fig:ratio}
\end{figure}

Trace plots of the first coordinate of the AM and VBAM chains are
shown in Figure~\ref{fig:x1_chains}. It can be seen that the VBAM
seems to stabilize to the stationary distribution a bit faster than
the AM algorithm. This is confirmed by the suboptimality factors
\citep{Roberts+Rosenthal:2009} shown in Figure~\ref{fig:ratio}.
In this examples, the suboptimality factor of VBAM reaches the value of around one
faster than AM which implies that its adaptation works faster.

\subsection{20-dimensional banana-shaped distribution}
In this example, we consider a banana-shaped distribution which is
also often used as an example to study the performance of adaptive MCMC
algorithms \citep{Haario+Saksman+Tamminen:1999,Roberts+Rosenthal:2009,Haario+Saksman+Tamminen:2001,Bornkamp:2011}.
The banana-shaped distribution density function
is given as
\begin{align*}
 \mathbf{f}(x_1,x_2,\ldots,x_d) &\propto \exp\left(-\frac{x_1^2}{200}-\frac{1}{2}(x_2+B\, x_1^2-100 \,B)^2-\frac{1}{2}(x_3^2+x_4^2+\cdots+x_d^2)\right),
\end{align*}
where $B>0$ is the `bananicity' constant.

\begin{figure}[htb]
\centering
\includegraphics{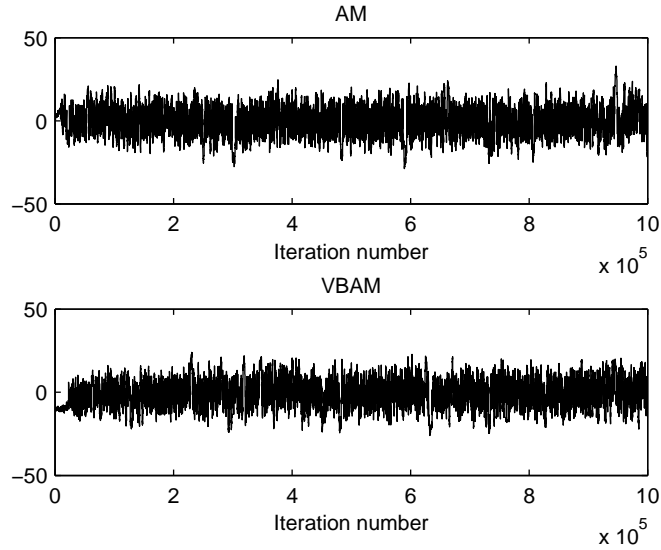}
\caption{Trace plots of the first component of the banana distribution. The first
  row is the AM plot and the second is for VBAM. The mixing of the samples
  in both AM and VBAM is good.}
\label{fig:banana_trace}
\end{figure}

We used the VBAM algorithm to sample from this distribution and compared the results
with the AM algorithm of \citet{Roberts+Rosenthal:2001} whose proposal distribution was \eqref{eq:RR_proposal}.
The VBAM state space model was the random walk model \eqref{eq:RWM} with
$\mathbf{A} = \mathbf{I}$, $\mathbf{H} = \mathbf{I}$ and $\mathbf{Q} = 10^{-9} \, \mathbf{I}$.
As in \citep{Roberts+Rosenthal:2001}, we set $d=20$ and $B=0.1$ and ran the
AM and VBAM algorithms for $10^6$ iterations. Figure~\ref{fig:banana_trace}
shows the trace plots for the first components $x_1$ and $x_2$. It can be observed that the AM and VBAM algorithms both
mix with an approximately same speed.

\begin{figure}[htb]
\centering
\includegraphics{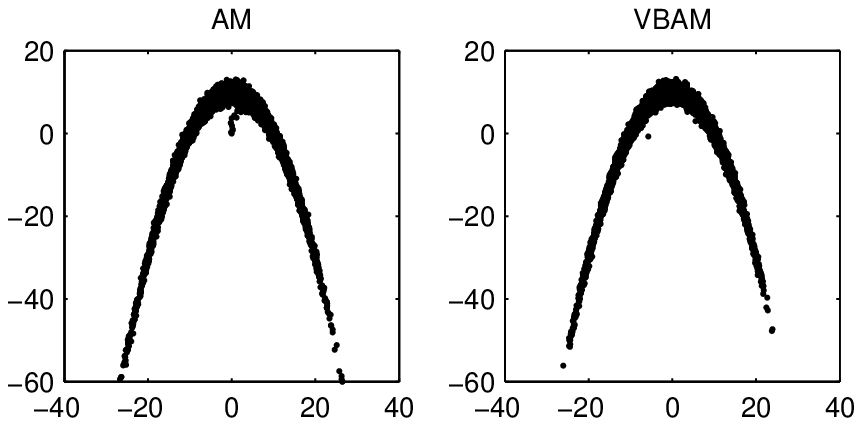}
\caption{Scatter plots for the first two components in the banana example. The first plot is for AM, the second plot is
for VBAM.}
\label{fig:banana}
\end{figure}

Figure~\ref{fig:banana} shows the scatter plots for AM and VBAM algorithms. As can be seen, the shapes of the plots
indeed have a banana-like shape and they cover the support of the
distribution well. In this example we did not find any significant difference between AM and VBAM, but
still it shows that also the VBAM algorithm works well in this challenging sampling problem.

\subsection{Chemical Reaction Model}
As the first real data example\footnote{Both the real data
models can also be found in \url{http://helios.fmi.fi/~lainema/mcmc/examples.html},
where the author analyzes them using MCMC.} we analyze a chemical reaction model
studied in the book by \citet{Himmelblau:1970}.
The model found on page 326--327 \citep{Himmelblau:1970} is a result of deterministic modeling of chemical reactions
which involve six species ($A$, $B$, $C$, $D$, $E$ and $F$) and
three type of reactions.

The chemical reactions are
\begin{equation}\label{eq:chemical}
\begin{split}
A+B&\xrightarrow{k_1}C+F, \\
A+C&\xrightarrow{k_2}D+F, \\
A+D&\xrightarrow{k_3}E+F,
\end{split}
\end{equation}
and they are modeled with the differential equations
\begin{equation}\label{eq:reactions}
\begin{split}
\frac{\mathrm{d}A}{\mathrm{d}t} & = -k_1A\,B - k_2 A \,C -k_3A \, D, \\
\frac{\mathrm{d}B}{\mathrm{d}t} & = -k_1 A\, B, \\
\frac{\mathrm{d}C}{\mathrm{d}t} & = k_1 A\, B- k_2 A\,C,  \\
\frac{\mathrm{d}D}{\mathrm{d}t} & = k_2 A\,C - k_3 A\,D, \\
\frac{\mathrm{d}E}{\mathrm{d}t} & = k_3 A \,D.
\end{split}
\end{equation}
The task is to estimate the parameters $k_1$, $k_2$ and $k_3$ from experimental data
having the initial values given as $C(0) = D(0)=0$, $A(0) = 0.02090 \text{ mol/l}$ and $B(0) =A(0)/3$.
The data are shown in Figure~\ref{fig:himme}.

\begin{figure}[htb]
\centering
\includegraphics{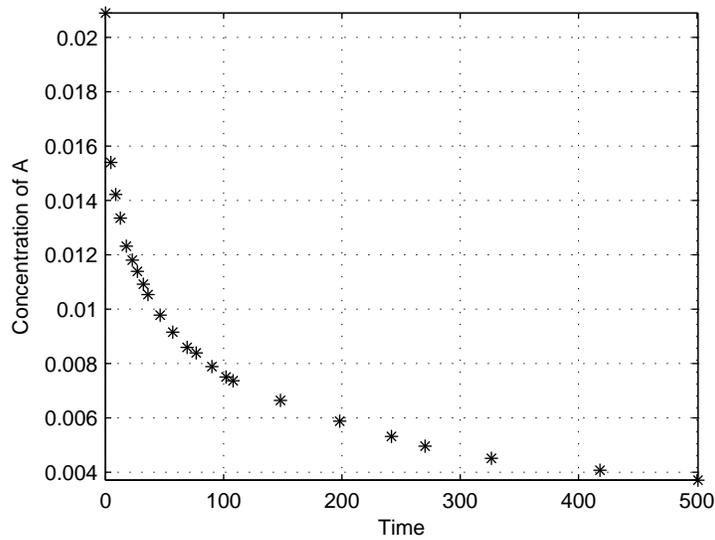}
\caption{Experimental data for concentration of $A$ in the chemical reaction model.}
\label{fig:himme}
\end{figure}

The reported parameter values were $k_1=14.7$, $k_2=1.53$ and $k_3=0.294$ \citep{Himmelblau:1970}.
We applied the VBAM algorithm to sample the parameters and then compared the results with the reported ones.
We used the random walk state space model \eqref{eq:RWM} with $\mathbf{A}_k = \mathbf{I}$,
$\mathbf{H}_k = \mathbf{I}$ and $\mathbf{Q}_k = 10^{-9}\mathbf{I}$.

\begin{figure}[htb]
\centering
\includegraphics{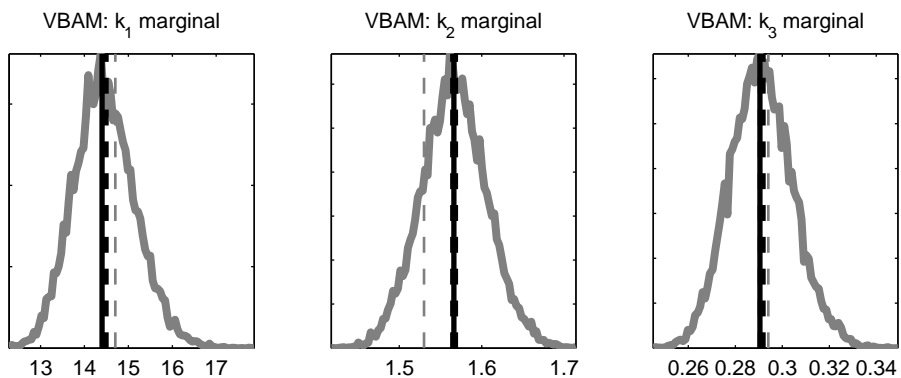}
\caption{Marginal distributions for parameters $k_1$, $k_2$, and $k_3$ in the chemical reaction model estimated from the samples
of VBAM plotted together with the sample means, maximum a posterior (MAP) estimates, and reported parameter values (RPV) in the chemical reaction model.
Here, the gray lines denote the distributions, the black dotted lines the sample means, the light dotted lines the RPV, and the black lines the MAP estimates. }
\label{fig:himm_marg}
\end{figure}

The marginal distribution plots estimated from 100,000 VBAM samples are shown in Figure \ref{fig:himm_marg}.
In the figure, it can be seen that the estimates are well consistent with the reported parameter values.
The scatter plots for VBAM samples are shown in Figure \ref{fig:himm_scatter}.
It is observed that there exist correlation between parameters, for example, $k_2$ and $k_3$
seems to have negative correlation. However, the correlation is not particularly strong and hence the parameters
are quite well identifiable from the data.

\begin{figure}[htb]
\centering
\includegraphics{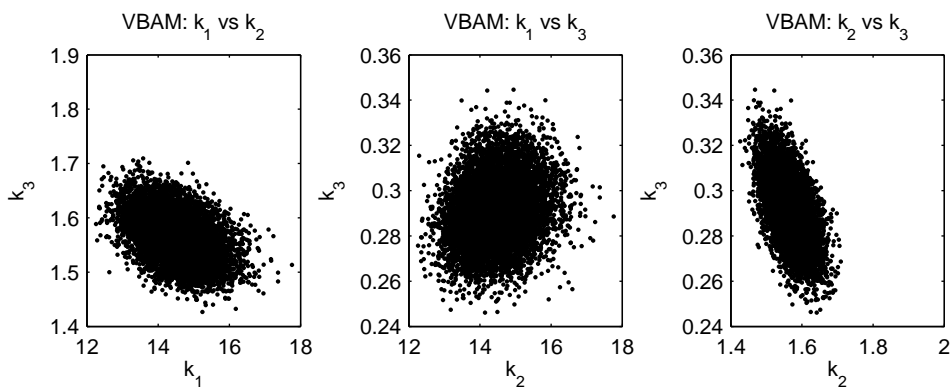}
\caption{VBAM scatter plots for the chemical reaction model. It is observed that there is some correlation between the parameters.}
\label{fig:himm_scatter}
\end{figure}
%
%

%
\subsection{Monod Model}

As the second real data example, we analyze the bacteria growth models studied in the book by \citet{Brown+Berthouex:2002}.
The estimation of the parameters of the Monod model has been studied for instance,
in Chapter 35 of the book \citep{Brown+Berthouex:2002}, where the authors used
the experimental data shown in Figure~\ref{fig:mono}.
The data were obtained by operating a continuous flow biological reactor at steady-state conditions
and the following Monod model was proposed to fit the data:
\begin{equation}\label{eq:mono}
y = \frac{\theta_1 \, x}{\theta_2 + x} + \epsilon.
\end{equation}
In Equation~\eqref{eq:mono}, $y$ is the growth rate expressed per hour
and is obtained at substrate concentration $x$, $\theta_1$ is the
maximum growth rate expressed per hour, $\theta_2$ is the
saturation constant, and $\epsilon$ is a Gaussian noise \citep{Brown+Berthouex:2002}.
In \citep{Brown+Berthouex:2002}, the parameters were estimated to be $\theta_1 = 0.153$ and $\theta_2 = 55.4$.

\begin{figure}[htb]
\centering
\includegraphics{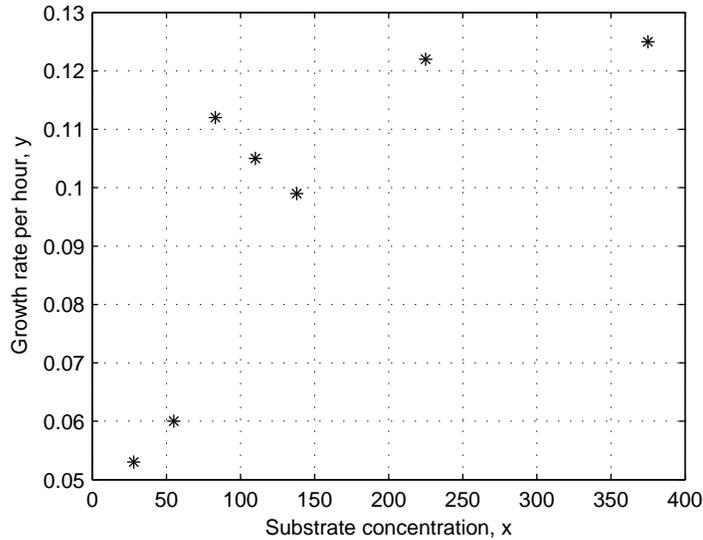}
\caption{Data for the bacteria growth (Monod) model.}
\label{fig:mono}
\end{figure}

We estimated these parameters using the VBAM algorithm. We used the random walk state space model \eqref{eq:RWM} for VBAM algorithm,  where
$\mathbf{A}_k = \mathbf{I}$, $\mathbf{H}_k = \mathbf{I}$ and $\mathbf{Q}_k = 10^{-9}\mathbf{I}$.

The marginal distribution estimates computed from 100,000 VBAM samples together with the sample means,
MAP estimates, and reported parameter values as well as the scatter plot are shown Figure~\ref{fig:mono_marg_scatter}.
The plots show that the reported parameter values are well within the estimated parameter distribution and that
there exist strong correlation between the parameters.

\begin{figure}[htb]
\centering
\includegraphics{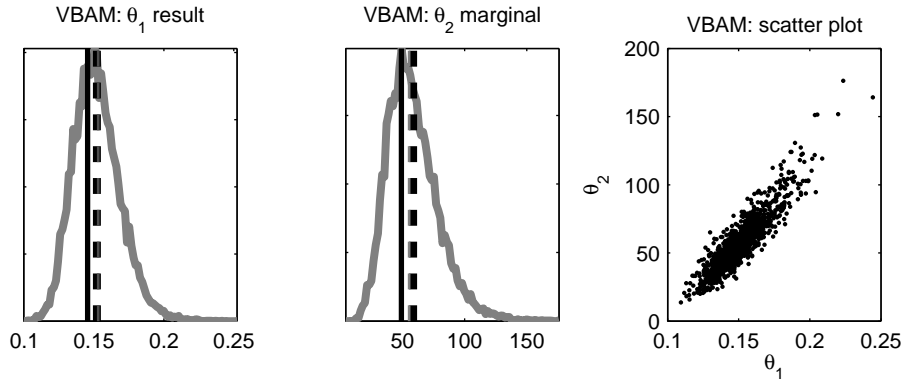}
\caption{The first two plots from left show the marginal distributions for
parameters $\theta_1$ and $\theta_2$ in the Monod model estimated from the samples
of VBAM, plotted along with the sample means, MAP, and reported parameter values (RPV).
The gray lines denote the distributions, the black dotted lines the sample means,
the light dotted lines the RPV, and the black lines the MAP estimates.
As can be seen, the parameter estimates are consistent with RPV. The plot on the right shows the scatter plot.
The parameters $\theta_1$ and $\theta_2$ seem to have a strong positive correlation.}
\label{fig:mono_marg_scatter}
\end{figure}

\section{Conclusion and Discussion} \label{sec:codi}
In this paper, we have proposed a new adaptive Markov chain Monte
Carlo (MCMC) method called \emph{variational Bayesian adaptive
  Metropolis (VBAM)} algorithm, which adapts the covariance matrix of
the Gaussian proposal distribution in the Metropolis algorithm with
the variational Bayesian adaptive Kalman filter (VB-AKF, 
\cite{Sarkka+Hartikainen:2013}). We have shown that the method
is indeed a valid adaptive MCMC method in the sense that it samples from the
correct target distribution by proving a strong law of large numbers
for it. We have numerically tested the performance of the method in
widely used example models and compared it to two other adaptive
MCMC schemes.  In the first two simulated experiments, our method
turned out to perform better than the AM algorithm of
\citet{Haario+Saksman+Tamminen:1999,Haario+Saksman+Tamminen:2001}. In
the third simulated example, the performance was similar to the performance of
the AM algorithm of \citet{Roberts+Rosenthal:2001}. In the two real data examples,
VBAM also produced results which are consistent with results reported in literature.

The advantage of the proposed method is that it has more parameters to
tune, which gives more freedom. In particular, the tight relationship with the linear systems
theory and Kalman filtering allows one to borrow good state space
models from target tracking literature
\citep{Jazwinski:1970,Shalom+Li+Kirubarajan:2001} and use them as the models in
VBAM. The correctness of the method can be easily verified by
checking that the resulting state space model is uniformly completely
observable and controllable, which is a standard step in building state
space models. Sometimes, however, the freedom of choosing algorithm parameters can
be seen as a disadvantage, because manual tuning of the VB-AKF model
parameters can turn out to be challenging. Fortunately, in many cases
a simple Gaussian random-walk state space model is a good default
choice.

The computational requirements of the VBAM method are typically $O(d^3)$,
where $d$ is the parameter dimensionality, while the complexity of a usual implementation of AM is $O(d^2)$. 
This is because the VB-AKF step is needed, which amounts to a (constant) number
of Kalman filter updates at each iteration and these operations are computationally
more demanding than what is needed in the standard AM. However, these operations
are still quite cheap and when the model is complex enough to require MCMC
sampling, the evaluation of the distribution can be expected to
dominate the computation time anyway. Furthermore, these operations can be typically
optimized for a given state space model. For example, in the random walk model we
do not actually need to perform all the matrix operations in full generality, because
the model matrices are diagonal. Even
though the basic implementation of the method is straightforward and not significantly harder than implementation
of an AM algorithm, developing an optimized version of the VBAM method for a particular type of state space model
can be more complicated.

An advantage of the method is that it can also easily be generalized in various ways.  For
example, we could extend it by replacing the linear Kalman filter with
non-linear Kalman filters such as the extended Kalman filter,
a sigma-point (unscented) filter, or even particle filters
\citep{Sarkka:2013}. In fact, provided that we can ensure that
the mean and covariance of the corresponding non-linear Kalman filter
remain bounded, replacing the linear VB-AKF with a non-linear one
\citep{Sarkka+Hartikainen:2013} should lead to a valid VBAM algorithm
as well. Similarly, a (Rao--Blackwellized) particle filter could be
used for estimating the noise covariance \citep{Sarkka:2013} and provided that the adaptation can be shown to
diminish in time. However, with non-linear state space model it will
be hard to find good state space models for the algorithm. This path
is interesting though, because it can lead to a completely new family
of adaptive MCMC algorithms, which utilize different kinds of filters
in the proposal adaptation.

\section*{Acknowledgments}
We are grateful to Arno Solin for proofreading this paper.
Isambi S.~Mbalawata was supported by the Finnish Centre of Excellence on
Inverse Problems Research of Academy of Finland.
Matti Vihola was supported by the Academy of Finland project 250575.
Simo S\"arkk\"a was supported by the Academy of Finland projects 266940 and 273475.

\bibliographystyle{model1b-num-names}
\bibliography{filter_amcmc}







\end{document}